\documentclass[12pt]{amsart}
\usepackage{amssymb}
\usepackage{verbatim}
\usepackage[usenames]{color}
\usepackage{url}
\usepackage[all]{xy}

\newtheorem{theorem}{Theorem}[section]
\newtheorem*{theorem*}{Theorem}
\newtheorem{proposition}[theorem]{Proposition} 
\newtheorem{observation}[theorem]{Observation} 
\newtheorem{lemma}[theorem]{Lemma}
\newtheorem{cor}[theorem]{Corollary}
\newtheorem{fact}[theorem]{Fact}
\newtheorem{conjecture}[theorem]{Conjecture}

\theoremstyle{definition}
\newtheorem{definition}[theorem]{Definition}

\newtheorem{question}[theorem]{Question}

\theoremstyle{remark}
\newtheorem{example}[theorem]{Example} 
\newtheorem{remark}[theorem]{Remark}

\newcommand{\defemp}{\textit}

\newcommand{\forces}{\Vdash}
\newcommand{\zf}{\textrm{ZF}}
\newcommand{\ad}{\textrm{AD}}
\newcommand{\pd}{\textrm{PD}}

\newcommand{\zfc}{\textrm{ZFC}}
\newcommand{\ch}{\textnormal{CH}}

\newcommand{\ac}{\textnormal{AC}}
\newcommand{\dc}{\textnormal{DC}}
\newcommand{\psp}{\textnormal{PSP}}

\newcommand{\dom}{\textnormal{Dom}}

\newcommand{\mf}{\mathfrak}
\newcommand{\mc}{\mathcal}
\newcommand{\mbb}{\mathbb}

\newcommand{\p}{\mathcal{P}}

\newcommand{\baire}{{^\omega \omega}}
\newcommand{\bairenodes}{{^{<\omega}\omega}}

\newcommand{\unif}{\textrm{Uniformization}}

\title{Applying Generic Coding with Help to Uniformizations}
\author{Dan Hathaway}

\address{
Dan Hathaway \\
Mathematics Department \\
University of Vermont\\
Burlington, VT 05401, U.S.A.}
\email{Daniel.Hathaway@uvm.edu}

\thanks{A portion of the results of this paper
 were proven during the September 2012 Fields Institute Workshop
 on Forcing while the author was supported by the Fields Institute.
Work was also done while under NSF grant DMS-0943832.
}


\begin{document}

\begin{abstract}
This is a follow up to a paper
 by the author where
 the disjointness relation
 for (the graphs of)
 definable functions from $\baire$ to $\baire$
 is analyzed.
In that paper,
 for each
 $a \in \baire$
 we defined a Baire class one function
 $f_a^{GC} : \baire \to \baire$ which
 encoded $a$ in a certain sense.
Given $g : \baire \to \baire$,
 let $\Psi(g)$ be the statement that $g$
 is disjoint from at most countably many
 of the functions $f_a^{GC}$.
We show the consistency
 strength of $(\forall g)\, \Psi(g)$
 is at most one inaccessible cardinal.
We show that
 $\ad^+$ implies $(\forall g)\, \Psi(g)$. 
Finally, we show that assuming large cardinals,
 $(\forall g)\, \Psi(g)$ holds
 in models of the form
 $L(\mbb{R})[\mc{U}]$ where $\mc{U}$
 is a selective ultrafilter on $\omega$.
\end{abstract}

\maketitle



\section{Introduction}

We do not assume the Axiom of Choice
 in this paper unless explicitly stated.
Our base theory is $\zf$.
In \cite{Hathaway} we isolated
 a lemma about Tree-Hechler Forcing.
We review this as our Lemma~\ref{mainlemma}
 (the so called Main Lemma).
One immediate consequence of this lemma,
 which is the focus of \cite{Sy},
 is the following:
\begin{theorem}[Generic Coding with Help]
If $M$ is a countable transitive model of $\zf$ and
 $x,y \in \mbb{R}$ are reals such that $y \not\in M$,
 then there is some Tree-Hechler generic $G$ over $M$
 such that $x \in L[y,G]$.
\end{theorem}

The proof of the Generic Coding with Help Theorem
 has many interesting consequences
 (which are explored in \cite{Sy}),
 such as the following:
\begin{cor}
Let $M$ be any transitive model of $\zf$.
Let $\bar{a}$ be a set of ordinals not in $M$
 but such that $\mbox{sup}( \bar{a} ) \in M$.
Then there is a $G$ that is set generic over $M$
 (which exists in a class forcing extension of $V$)
 such that $V \subseteq L[G][\bar{a}]$.
\end{cor}

However in this paper we take a step back
 and consider how to apply the Main Lemma
 to one area of descriptive set theory.
Specifically, we apply it to uniformizations.
Recall that given a binary relation
 $R \subseteq \baire \times \baire$
 such that $$(\forall x \in \baire)(\exists y \in \baire)\, (x,y) \in R,$$
 we call $g : \baire \to \baire$ a \textit{uniformization} of $R$
 iff $$(\forall x \in \baire)\, (x,g(x)) \in R.$$
We will consider an edge case of this problem,
 where $R$ is the complement of the graph
 of a function from $\baire$ to $\baire$.
In other words, we have a function $f : \baire \to \baire$
 and the problem is to find a function $g : \baire \to \baire$
 such that $$f \cap g = \emptyset$$
 (the graphs of $f$ and $g$ are disjoint).

We show that there are several definable and uniform ways to
 map every real $a \in \baire$ to a function $f_a : \baire \to \baire$
 such that if $g : \baire \to \baire$ is a ``definable''
 function such that $f_a \cap g = \emptyset$,
 then $a$ is in a countable and canonical set
 of reals associated to $g$
 (or rather, associated to an $\infty$-Borel code for $g$).
We discuss two such mappings:
 one is $a \mapsto f_a^{PSP}$
 which was explained to us by an anonymous referee,
 and a mapping $a \mapsto f_a^{GC}$
 which was developed in \cite{Hathaway}
 but restricted to projective functions $g$ there.

First, let us describe a very coarse version of the problem:
\begin{definition}
A family of functions $\{ f_a : a \in \baire \}$
 from $\baire$ to $\baire$ indexed by $\baire$
 such that $(a,x) \mapsto f_a(x)$ is Borel
 is called a \defemp{Borel family}.
\end{definition}

\begin{definition}
Given functions $f,g : \baire \to \baire$,
 we say that $g$ \defemp{avoids} $f$ iff
 $f \cap g = \emptyset$.
\end{definition}

\begin{definition}
\label{family_avoid}
Fix a family $\mc{F} = \{ f_a : a \in \baire \}$
 of functions from $\baire$ to $\baire$
 and a function $g : \baire \to \baire$.
We say that $g$ \defemp{cannot avoid} $\mc{F}$
 iff $\{ a : f_a \cap g = \emptyset \}$
 is countable.
That is, $g$ cannot avoid $\mc{F}$ iff $g$ can avoid only
 countably many functions in $\mc{F}$.
\end{definition}

\begin{definition}
Let $\Psi$
 be the statement that there is a
 Borel family of functions from $\baire$ to $\baire$
 that no function from $\baire$ to $\baire$
 can avoid.
\end{definition}

\begin{question}
In what models does $\Psi$ hold,
 and what families witness that $\Psi$ holds?
\end{question}

Recall that the perfect set property (PSP)
 says that every uncountable set of reals
 contains a perfect subset.
We will review this in Section~\ref{psp_section}.
The PSP holds in both the Solovay model
 and in any model of $\ad$.
We show that the PSP
 implies that the Borel family $\{ f_a^{PSP} : a \in \baire \}$
 cannot be avoided by any function from $\baire$ to $\baire$.
Hence the PSP implies $\Psi$.

So now we know that $\{ f_a^{PSP} : a \in \baire \}$
 cannot be avoided in the Solovay model
 or in any model of $\ad$.
Similarly, we show by the Main Lemma about Tree-Hechler forcing
 that in the Solovay model
 or in any model of $\ad^+$,
 $\{ f_a^{GC} : a \in \baire \}$
 cannot be avoided by any function from $\baire$ to $\baire$.

On the other hand, we show
 in Corollary~\ref{zfc_implies_not_psi}
 that $\zfc$ implies $\neg \Psi$.
We analyze that proof and
 conjecture in Section~\ref{lowerboundsection}
 that $\dc + \Psi$
 implies $(\forall r \in \baire)$
 $\omega_1$ is inaccessible in $L[r]$.

Thus, here is our conjecture regarding
 consistency strengths:
\begin{conjecture}
The following theories are equiconsistent.
\begin{itemize}
\item[1)] $\zfc + \exists$ an inaccessible cardinal;
\item[2)] $\zf + \dc + \psp$;
\item[3)] $\zf + \dc + \{ f_a^{PSP} : a \in \baire \}$ cannot be avoided;
\item[4)] $\zf + \dc + \{ f_a^{GC} : a \in \baire \}$ cannot be avoided;
\item[5)] $\zf + \dc + \Psi$;
\item[6)] $\zf + \dc + \Psi$
 for only projective $g$'s.
\end{itemize} 
\end{conjecture}
The equiconsistency of 1) and 2) is well-known:
 one direction uses the Solovay model.
We show in Section~\ref{psp_section} that
 $\psp$ implies $\{ f_a^{PSP}: a \in \baire \}$
 cannot be avoided, so
 2) actually implies 3).
The consistency of 1) implies
 the consistency of 4)
 using the Solovay model as we show in
 Section~\ref{upperboundsection}.
Each of 3) and 4) imply 5)
 trivially,
 and 5) trivially implies 6).
Finally, that 6) implies
 there is an inner model with an inaccessible
 cardinal is still a conjecture.

$\ad^+$ is an axiom which implies $\ad$,
 the Axiom of Determinacy,
 and it is open whether $\ad$ implies $\ad^+$.
The axiom $\ad^+$ implies that every set of reals
 (and hence every function from $\baire$ to $\baire$)
 has a so called $\infty$-Borel code
 $C \subseteq \mbox{Ord}$.
We will define $\infty$-Borel codes soon.

We are also interested in the following question:
 given a Borel family such as $\{ f_a : a \in \baire \}$
 and a ``definable'' function $g$,
 what is a canonical description of a countable set $S \subseteq \baire$ such that
 $S$ contains $\{ a \in \baire: f_a \cap g = \emptyset \}$?
For the family $\{ f_a^{PSP}: a \in \baire \}$,
 we have the following:

\begin{theorem}
\label{PSP_adp_preview}
Assume $\ad^+$.
Let $g : \baire \to \baire$ be a function.
Let $Y \subseteq \mbox{Ord}$ be an $\infty$-Borel
 code for $g$.
Then for any $a \in \baire$,
 $$[f_a^{PSP} \cap g = \emptyset] \rightarrow a \in \textnormal{HOD}_{\{Y\}}.$$
\end{theorem}

For the $\{ f_a^{GC} : a \in \baire \}$
 family, we prove a sharper result using the Main Lemma:
\begin{theorem}
\label{GC_adp_preview}
Assume $\ad^+$.
Let $g : \baire \to \baire$ be a function.
Let $Y \subseteq \mbox{Ord}$ be an $\infty$-Borel
 code for $g$.
Then for any $a \in \baire$,
 $$[f_a^{GC} \cap g = \emptyset] \rightarrow a \in L[Y].$$
\end{theorem}

Comparing the past two theorems,
 our intuition is
 that $\{ f_a^{GC} : a \in \baire \}$
 is ``harder to avoid''
 than $\{ f_a^{PSP} : a \in \baire \}$.
We compare versions of the past
 two theorems but for projective
 functions $g : \baire \to \baire$
 in Section~\ref{section_comparing}.

Finally, this work suggests defining
 the following two regularity properties:
 $g : \baire \to \baire$
 is PSP-regular iff it cannot avoid
 $\{ f_a^{PSP} : a \in \baire \}$,
 and it is GC-regular iff it cannot avoid
 $\{ f_a^{GC} : a \in \baire \}$.
Projective Determinacy (PD)
 implies that every projective
 $g : \baire \to \baire$
 is both PSP-regular
 and GC-regular (by Section~\ref{section_comparing}).
By Theorem~\ref{PSP_adp_preview}
 and Theorem~\ref{GC_adp_preview} above,
 if $g : \baire \to \baire$
 is in an inner model of $\ad^+$
 containing all the reals,
 then it is both PSP-regular and
 GC-regular.
However, there may be more
 PSP-regular and GC-regular functions.
That is,
 suppose there is a proper class
 of Woodin cardinals and $\ch$
 holds.
Let $\mc{U}$ be a selective
 ultrafilter on $\omega$.
Now
 $L(\mathbb{R})[\mc{U}]$
 is a generic extension of
 $L(\mathbb{R})$
 (see \cite{Ket} and
 \cite{Farah}).
Using an argument pointed out
 to us by Paul Larson,
 the model $L(\mathbb{R})[\mc{U}]$
 also satisfies the $\psp$.
Thus,
 every $g : \baire \to \baire$
 in $L(\mathbb{R})[\mc{U}]$
 is PSP-regular.
Our last result is Theorem~\ref{barren_thm}
 that states that in this same model
 $L(\mathbb{R})[\mc{U}]$,
 every $g$ is GC-regular as well.

\begin{remark}
There is a different type of information
 that the disjointness relation can capture.
Namely, assume $\ad$.
Fix $\alpha < \Theta$,
 where $\Theta$ is the smallest ordinal that
 $\baire$ cannot be surjected onto.
Then there is a function
 $f : \baire \to \baire$ such that
 if $g : \baire \to \baire$
 is any function that satisfies
 $g \cap f = \emptyset$,
 then $g$ has Wadge rank $> \alpha$.
We can construct $f$ by diagonalizing
 over all functions of Wadge rank $\le \alpha$:
 let $\langle h_x : x \in \baire \rangle$
 be a logically simple enumeration of all
 continuous functions from $\baire \times \baire$
 to $\baire$.
Let $W \subseteq \baire$ be a set
 of Wadge rank $\alpha$.
For each $x \in \baire$,
 if $h_x^{-1}(W)$ is a function,
 define $f(x) := h_x^{-1}(W)(x)$.
Otherwise,
 define $f(x)$ to be anything.
Every Wadge rank $\le \alpha$ function from
 $\baire$ to $\baire$ appears as some
 $h_x^{-1}(W)$.

Since $\baire \cong \baire \sqcup \baire$,
 we may combine this remark with
 Theorem~\ref{infborthm} which we will prove.
That is, assume $\ad^+$.
For every $\alpha < \Theta$
 and for every $a \in \baire$,
 there is a function $f_{\alpha,a} : \baire \to \baire$
 such that whenever $g : \baire \to \baire$
 satisfies $f_{\alpha,a} \cap g = \emptyset$, then
\begin{itemize}
\item[1)] $g$ has Wadge rank $> \alpha$, and
\item[2)] $a \in L[Y]$ for any $\infty$-Borel
 code $Y \subseteq \mbox{Ord}$ for $g$.
\end{itemize}
\end{remark}

\subsection{$\infty$-Borel sets of reals}

Here is a concept we will use several times,
 so let us introduce it now:
\begin{definition}
A set $X \subseteq \baire$
 is $\infty$-Borel iff there is a
 pair $(C,\varphi)$,
 called an $\infty$-Borel code,
 such that $C$ is a set of ordinals
 and $\varphi$ is a formula such that
 $$X = \{ x \in \baire : L[C,x] \models
 \varphi(C,x) \}.$$
A similar definition applies to relations
 $R \subseteq \baire \times ... \times \baire$.
We abuse language
 and call a set $C \subseteq \mbox{Ord}$ an
 $\infty$-Borel code for $X \subseteq \baire$ iff
 there is a formula $\varphi$ such that
 $(C,\varphi)$ is an $\infty$-Borel code for $X$.
\end{definition}

See \cite{Larson_book} for more on $\infty$-Borel sets
 and $\ad^+$ in general.

We do \textit{not} define a function
 $g : \baire \to \baire$ to be $\infty$-Borel
 iff its graph is $\infty$-Borel:
 if $C$ is an $\infty$-Borel code for the graph
 of $g : \baire \to \baire$,
 there is no guarantee that
 $g(x) \in L[C,x]$.
This is the reason for the following
 definition:

\begin{definition}
A function $g: \baire \to \baire$
 is $\infty$-Borel iff there is a
 pair $(C,\varphi)$,
 called an $\infty$-Borel code,
 such that for all $x \in \baire$ and $n,m \in \omega$,
 $$g(x)(n) = m :\Leftrightarrow 
 L[C,x] \models
 \varphi(C,x,n,m).$$
We abuse language and call
 $C \subseteq \mbox{Ord}$ an
 $\infty$-Borel code for $g: \baire \to \baire$ iff
 there is a formula $\varphi$ such that
 $(C,\varphi)$ is an $\infty$-Borel code for $g$.
\end{definition}

We similarly define $\infty$-Borel codes
 for functions $g : \baire \to \baire \times
 [\omega]^\omega$, etc.
We will sometimes be loose and write a code $(C,\varphi)$
 for the graph of $g$, but we will always mean
 the more technical definition.
Note that if $g : \baire \to \baire$
 is $\infty$-Borel with code $C$,
 then $g(x) \in L[C,x]$ for all $x$.
Our strong definition of a function being $\infty$-Borel
 is justified because
 if every $A \subseteq \baire$ is $\infty$-Borel,
 then every $g : \baire \to \baire$ is $\infty$-Borel.

\begin{example}
Consider the function
 $g : \baire \to \baire$
 defined by $g(x) = x'$,
 where $x'$ is the Turing jump of $x$.
This function is $\infty$-Borel
 with $\infty$-Borel code the empty set
 (because $x'$ is definable in $L[x]$).
\end{example}

The following is important for us:
\begin{fact}
\label{ad_countable_is_well_behaved}
Assume $\ad^+$.
Let $X$ be a countable set of reals and
 let $Y$ be an $\infty$-Borel code for
 $X$.
Then $X \subseteq \textnormal{HOD}_{\{Y\}}$.
\end{fact}
\begin{proof}
See Fact 3.3 of \cite{Chan}.
\end{proof}

\section{$\Psi$ is inconsistent with $\zfc$}
\label{section_zfc_implies_not_psi}

Recall that $\unif$ is the fragment
 of the Axiom of Choice that states that
 given any $R \subseteq \baire \times \baire$
 satisfying $(\forall x \in \baire)
 (\exists y \in \baire)\,
 (x,y) \in R$,
 then there is a function
 $u : \baire \to \baire$ such that
 $u \subseteq R$.
We call $u$ a \textit{uniformization} for $R$,
 or say that $R$ is \textit{uniformized} by $u$.
Within this paper,
 we will not assume the Axiom of Choice
 unless explicitly stated.
$\zf$ is our base theory.

\begin{proposition}
\label{unifprop}
$\unif + \Psi$ implies that if
 $S \subseteq \baire$ is uncountable,
 then it can be surjected onto $\baire$
 by a Borel function.
\end{proposition}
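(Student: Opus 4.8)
The plan is to fix a family $\{f^a : a \in \baire\}$ witnessing $\Psi$ and, given an uncountable $S \subseteq \baire$, to extract the required Borel surjection directly from this family rather than building one by hand. The key observation is that for each fixed $x \in \baire$ the \emph{vertical section} $h_x : \baire \to \baire$ given by $h_x(a) = f^a(x)$ is Borel, being the composition of the continuous map $a \mapsto (a,x)$ with the jointly Borel map $(a,x) \mapsto f^a(x)$ from condition 1) of Definition~\ref{psi_def}. I will show that for at least one $x$ the function $h_x$ already maps $S$ onto $\baire$; any such $h_x$ is then the desired Borel surjection, and in fact this gives a slightly sharper conclusion than asked for.

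Suppose toward a contradiction that no $h_x$ works, i.e.\ that for every $x \in \baire$ there is some $y \in \baire$ with $y \notin h_x[S]$, equivalently $f^a(x) \neq y$ for all $a \in S$. Consider the relation $R = \{(x,y) : (\forall a \in S)\, f^a(x) \neq y\}$. The assumption says precisely that $(\forall x \in \baire)(\exists y \in \baire)\,(x,y) \in R$, so $\unif$ provides a function $g : \baire \to \baire$ with $g \subseteq R$. By construction $g(x) \neq f^a(x)$ for every $x \in \baire$ and every $a \in S$, hence $g \cap f^a = \emptyset$ for every $a \in S$. Since $S$ is uncountable, $g$ is disjoint from $f^a$ for uncountably many $a$, contradicting condition 2) of Definition~\ref{psi_def}. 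Therefore some $h_x$ surjects $S$ onto $\baire$.

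The heart of the argument is this use of $\unif$. The failure hypothesis only tells us, for each $x$ \emph{separately}, that some value is omitted by $h_x$ on $S$; the real work is to splice these pointwise choices into a single function $g$ that simultaneously avoids $f^a$ everywhere and for every $a \in S$ at once. This is exactly the kind of selection that $\zf$ alone cannot perform and that $\unif$ supplies, and it is where I expect the only genuine subtlety to lie. Everything else is bookkeeping: verifying that the sections $h_x$ are Borel, and observing that $S \subseteq \{a : g \cap f^a = \emptyset\}$ forces the latter set to be uncountable. As a byproduct the argument shows more than the statement claims, namely that the Borel surjection can always be taken of the special form $a \mapsto f^a(x)$ for a suitable $x \in \baire$.
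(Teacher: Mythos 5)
Your proof is correct and is essentially identical to the paper's: you define the relation $R$ whose sections are the complements of the images $\{f^a(x) : a \in S\}$, apply $\unif$ to obtain a $g$ disjoint from every $f^a$ with $a \in S$, and contradict condition 2) of Definition~\ref{psi_def}, which is exactly the paper's argument. The sharper conclusion you note --- that the surjection can be taken of the form $a \mapsto f^a(x)$ for some fixed $x$ --- is also implicit in the paper's proof.
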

\begin{proof}
Because we are assuming $\Psi$,
 fix a Borel family $\{ f_a : a \in \baire \}$
 that no function can avoid.
Fix an uncountable set $S \subseteq \baire$.
For each $x \in \baire$, the function
 $a \mapsto f_a(x)$ is Borel.
We claim that for some $x \in \baire$,
 the function $a \mapsto f_a(x)$
 surjects $S$ onto $\baire$.
Suppose this is not the case.
For each $x \in \baire$,
 the set $Y_x := \baire - \{ f_a(x) : a \in S \}$
 is non-empty.
Apply $\unif$ to get $g : \baire \to \baire$
 such that $(\forall x \in \baire)\, g(x) \in Y_x$.
Then $g$ is disjoint from
 $f_a$ for each $a \in S$.
Since $S$ is uncountable,
 $g$ avoids the family of $f_a$
 functions, which is a contradiction.
\end{proof}

In Section~\ref{psp_section}
 we will recall that if
 an uncountable set $S \subseteq \baire$
 has a perfect subset,
 then $S$ can be surjected onto $\baire$
 by a Borel function.
This is another indication that $\Psi$
 may be related to $\mbox{PSP}$.

It is clear that $\Psi$ is inconsistent with
 $\zfc + \neg\ch$,
 because given any $S \subseteq \baire$
 of size $\omega_1 < 2^\omega$,
 there is a $g$ disjoint from $f_a$ for
 each $a \in S$.
We will now show that $\Psi$
 is inconsistent with $\zfc + \ch$ as well.
By Proposition~\ref{unifprop},
 every uncountable $S \subseteq \baire$
 can be surjected onto $\baire$
 by a Borel function.
Recall that $\mbox{add}(\mc{B})$
 is the smallest size of a collection
 of meager sets of reals whose union is not meager
 (see \cite{Bartoszynski} for more on $\mbox{add}(\mc{B})$,
 where it is called $\mbox{add}(\mc{M})$).
We have $\omega_1 \le \mbox{add}(\mc{B}) \le 2^\omega$.
This next proposition gives us
 our contradiction.
Paul Larson pointed out how to make the diagonalization
 not get stuck by using the meager ideal.

\begin{proposition}
\label{diag_argument}
Assume $\zfc + \mbox{add}(\mc{B}) = 2^\omega$.
Then there exists a size $2^\omega$ set
 $S \subseteq \baire$ that cannot be surjected onto
 $\baire$ by any Borel function.
\end{proposition}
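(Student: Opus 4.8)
The plan is to build $S$ by transfinite recursion of length $2^\omega$, enumerating all Borel functions as $\langle g_\alpha : \alpha < 2^\omega \rangle$ (there are exactly $2^\omega$ of them, each being coded by a real), and adding one new point $s_\alpha$ at each stage so that in the end $|S| = 2^\omega$ and each $g_\alpha$ omits some value on $S$. To make a value $y$ \emph{permanently} omitted by $g_\alpha$ on $S$, I will arrange that $S$ avoids the whole fiber $g_\alpha^{-1}(\{y\})$; the entire difficulty is to choose these fibers small enough that, via the additivity hypothesis, I never run out of room to place the next point. The meager ideal is the right notion of ``small,'' and $\mbox{add}(\mc{B}) = 2^\omega$ is exactly what prevents the construction from getting stuck.

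The key preliminary observation is that for any Borel $g : \baire \to \baire$, only countably many values have a non-meager fiber. First I note that each fiber $g^{-1}(\{y\})$ is Borel, hence has the Baire property. If distinct $y \ne y'$ both had non-meager fibers, then each fiber would be comeager in some basic open set drawn from a fixed countable basis $\{ U_n : n \in \omega \}$ of nonempty open sets; but two disjoint sets cannot both be comeager in the same $U_n$, since then $U_n = (U_n \setminus g^{-1}(\{y\})) \cup (U_n \setminus g^{-1}(\{y'\}))$ would be a union of two meager sets and hence meager, contradicting the Baire category theorem. Assigning to each non-meager fiber a witnessing index $n$ thus yields an injection into $\omega$. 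Consequently the set of $y$ with $g^{-1}(\{y\})$ meager is co-countable, in particular of size $2^\omega$ (values outside $\mathrm{ran}(g)$ have empty, hence meager, fibers).

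With this in hand the recursion runs as follows. At stage $\alpha$ the points $\{ s_\gamma : \gamma < \alpha \}$ already chosen form a set of size $< 2^\omega$, so $g_\alpha$ sends them to a set of size $< 2^\omega$; since co-countably many values have meager fibers, I can pick $y_\alpha$ outside this image with $M_\alpha := g_\alpha^{-1}(\{y_\alpha\})$ meager. I then choose $s_\alpha$ to be a point outside $\bigcup_{\beta \le \alpha} M_\beta$ and distinct from all earlier $s_\gamma$. This is possible because $\bigcup_{\beta \le \alpha} M_\beta$ together with the $< 2^\omega$ singletons $\{ s_\gamma \}$ is a union of fewer than $\mbox{add}(\mc{B}) = 2^\omega$ meager sets, hence meager, hence a proper subset of $\baire$.

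Finally I would verify that $S = \{ s_\alpha : \alpha < 2^\omega \}$ works: fixing $\beta$, I check $y_\beta \notin g_\beta[S]$, equivalently $S \cap M_\beta = \emptyset$. For $\gamma < \beta$ this holds because $y_\beta$ was chosen outside $g_\beta[\{ s_\gamma : \gamma < \beta \}]$, and for $\gamma \ge \beta$ it holds because $s_\gamma$ was chosen outside $M_\beta$. Hence $g_\beta$ is not onto $S$, and since every Borel function appears in the enumeration, no Borel function surjects $S$ onto $\baire$; distinctness of the $s_\alpha$ gives $|S| = 2^\omega$. I expect the main obstacle to be exactly the bookkeeping that keeps a forbidden value forbidden against \emph{all} later points --- handled by committing $y_\beta$ (and hence the meager set $M_\beta$) before any later point is placed and then forcing every subsequent point out of $M_\beta$ --- together with the meager-fiber lemma, which is what guarantees the accumulated forbidden region stays meager so that the additivity hypothesis may be applied.
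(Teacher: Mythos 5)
Your proposal is correct and takes essentially the same approach as the paper: the same transfinite enumeration of Borel functions, the same preliminary lemma that only countably many fibers $g^{-1}(\{y\})$ can be non-meager (via the property of Baire and the fact that two disjoint sets cannot both be comeager in the same basic open set), and the same use of $\mbox{add}(\mc{B}) = 2^\omega$ to keep the accumulated union of forbidden fibers meager at every stage. The only cosmetic difference is that you phrase the choice of $y_\alpha$ as avoiding the image of the earlier points, whereas the paper requires the fiber $h_\alpha^{-1}(y_\alpha)$ to miss those points --- the same condition.
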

\begin{proof}
Because $\mbox{add}(\mc{B}) = 2^\omega$,
 the union of $< 2^\omega$ meager sets of reals
 is meager.
For each Borel function $h$
 and each $y \in \baire$,
 $h^{-1}(y)$ has the property of Baire,
 so it is either comeager below
 a basic open set or it is meager.
There can be only countably many $y$
 such that $h^{-1}(y)$ is comeager
 below some basic open set, because otherwise
 there would be two that intersect.

We now begin the construction of
 $S = \{ a_\alpha : \alpha < 2^\omega \}$.
Let $\langle h_\alpha : \alpha < 2^\omega \rangle$
 be an enumeration of all Borel functions
 from $\baire$ to $\baire$.
First, pick any $y_0 \in \baire$ such that
 $X_0 := h_0^{-1}(y_0)$ is meager.
This $y_0$ will witness that $h_0$ does not
 surject $S$ onto $\baire$.
Now pick any $a_0 \in \baire - X_0$.

At stage $\alpha < 2^\omega$,
 pick any $y_\alpha \in \baire$
 such that $X_\alpha :=
 h_{\alpha}^{-1}(y_\alpha)$ is meager
 and does not contain any $a_\beta$
 for $\beta < \alpha$.
This is possible because there are
 only $< 2^\omega$ many
 $y$ such that $h_\alpha^{-1}(y)$ contains
 some $a_\beta$ for $\beta < \alpha$,
 and there are only $\omega$ many $y$
 such that $h_\alpha^{-1}(y)$ is not meager.
Then pick $a_\alpha \in \baire
 - \{ a_\beta : \beta < \alpha \}
 - \bigcup_{\beta \le \alpha} X_\beta$.
When the construction finishes,
 the set $S$
 will have size $2^\omega$ and for each
 $\alpha < 2^\omega$,
 $y_\alpha \not\in h_\alpha(S)$.
\end{proof}

\begin{cor}
\label{zfc_implies_not_psi}
$\zfc$ implies $\neg \Psi$.
\end{cor}
\begin{proof}
Assume, towards a contradiction,
 that $\zfc + \Psi$ is consistent.
Let us work within such a model.
We previously gave a quick argument
 that $\neg \ch$ implies $\neg \Psi$,
 so it must be that $\ch$ holds.
Thus $\mbox{add}(\mc{B}) = 2^\omega$ holds.
We also have Uniformization
 (because of the Axiom of Choice)
 and $\Psi$.
Thus the hypothesis of the
 previous two propositions are satisfied.
However, the conclusions
 of these propositions
 contradict one another.
\end{proof}

\begin{remark}
Miller \cite{Miller} has shown that in the
 iterated perfect set model,
 in which
 $\omega_1 = \mbox{add}(\mc{B}) < \omega_2 = 2^\omega$,
 every size $\omega_2$ set $S \subseteq \baire$
 can be surjected onto $\baire$ by a
 \textit{continuous} function.
The iterated perfect set model is obtained
 by starting with a model of $\ch$ and then
 adding $\omega_2$ many Sacks reals
 by a countable support iteration.
This leads us to the following question:
\end{remark}

\begin{question}
Is it consistent with $\zfc$
 that there is a Borel family
 $\{ f_a : a \in \baire \}$
 such that every $g : \baire \to \baire$
 is disjoint from only $< 2^\omega$
 of the $f_a$ functions?
In such a model we would need $\neg \ch$.
\end{question}

\section{$\psp$ implies $\Psi$}
\label{psp_section}

An anonymous referee has pointed out
 a certain family
 $$\{ f_a^{PSP} : a \in \baire \}$$
 which witnesses
 $\Psi$ when we assume the PSP.
We will describe that
 in this section.

Fix a computable bijection from $\omega$ to
 ${^{<\omega} \omega}$ so that
 we may talk about coding a perfect tree
 $T \subseteq {^{<\omega} \omega}$
 by a real $x \in \baire$.
The set of reals through a perfect tree
 can be surjected onto $\baire$
 in a uniform way that is Borel.
We make this precise in the following lemma:

\begin{lemma}
There is a Borel function $E : \baire \times \baire \to \baire$
 such that for each $x \in \baire$,
 if $x$ codes a perfect tree $T_x \subseteq {^{\omega} \omega}$,
 then $$\{ E(x, y) : y \in [T_x] \} = \baire.$$
Hence, there is an $\alpha < \omega_1$
 such that for every perfect set $S \subseteq \baire$,
 there is a ${\bf{\Sigma}}^0_\alpha$ function from $\baire$ to $\baire$
 that surjects $S$ onto $\baire$.
\end{lemma}
\begin{proof}
Fix an $x \in \baire$ that codes a perfect tree $T_x$.
For each $y \in [T_y]$,
 let $D_x(y) \in {^\omega 2}$ be the sequence of $0$'s and $1$'s
 such that for each $n < \omega$,
 $D_x(y)(n)$ specifies whether $y$
 goes through the leftmost child of the $n$-th splitting node of $T_x$
 along $y$ or whether it goes through a different child.
Now for a fixed $y$, $D_x(y)$ can be considered
 as a sequence of $s_0$ many zeros, followed by a one,
  followed by $s_1$ many zeros, followed by a one, etc.
Define $E(x,y) = \langle s_0, s_1, ... \rangle$.
One can verify that $E ``( \{x\} \times [T_x] ) = \baire$
 and also that the function $E$ is Borel.
\end{proof}

\begin{remark}
\label{universal_set_codes}
Let $\mc{X} = (\baire)^n$ for some $n$.
Let $\Gamma$ be a pointclass.
Suppose there is a set $U \subseteq \baire \times \mc{X}$
 such that for each $B \subseteq \mc{X}$ in $\Gamma$,
 there is a $b \in \baire$ such that
 $B = \{ y : (b,y) \in U \}$.
Suppose also that $U$ is in $\Gamma$.
Then we call $U$ a \textit{universal set}
 and we call $b$ a code for $B$.
For each $\alpha < \omega_1$,
 the pointclass $\Sigma^0_\alpha$
 has a universal set,
 so we may talk about codes for $\Sigma^0_\alpha$ sets.
\end{remark}

\begin{definition}
Fix $\alpha < \omega_1$ such that every perfect subset
 of $\baire$ can be surjected onto $\baire$ by a
 ${\bf{\Sigma}}^0_\alpha$ function
 (such an $\alpha$ exists by the previous lemma).
For each $a \in \baire$,
 let $f_a^{PSP} : \baire \to \baire$ be the function
 such that given any $x \in \baire$,
 if $x$ codes a perfect tree $T_x \subseteq \bairenodes$
 together with a ${\bf{\Sigma}}^0_\alpha$ surjection
 $s_x : [T_x] \to \baire$,
 and $a \in [T_x]$, then
 $$f^{PSP}_a(x) := s_x(a).$$
Otherwise, $f^{PSP}_a(x)$ is the zero sequence.
\end{definition}

We will define the $f_a^{GC}$ functions later.
However,
 it will be useful at this time to
 introduce the following notation:
\begin{definition}
\label{D_g_def}
Given $g : \baire \to \baire$,
 $$D_g^{PSP} = \{ a \in \baire : f_a^{PSP} \cap g = \emptyset \}$$
 $$D_g^{GC} = \{ a \in \baire : f_a^{GC} \cap g = \emptyset \}.$$
\end{definition}

The following will be used later:
\begin{lemma}
\label{raw_complexity_of_d}
Let $\Gamma$ be a pointclass
 containing all the Borel sets that
 is also closed under recursive substitutions.
Let $g : \baire \to \baire$ be in $\Gamma$
 in the sense that the ternary relation ``$g(x)(n) = m$''
 is in $\Gamma$.
Let $\{ f_a : \baire \to \baire \}$ be a Borel family.
Let $D_g := \{ a \in \baire : f_a \cap g = \emptyset \}$.
Then $D_g$ is
$\forall^{\baire}\neg \Gamma$.
\end{lemma}
\begin{proof}
A real $a \in \baire$ is in $D_g$ iff
 $$(\forall x \in \baire)(\forall n \in \omega)
 (\forall m \in \omega)[ g(x)(n) = m \rightarrow
 f_a(x)(n) \not= m].$$
\end{proof}

Here is the connection between the PSP
 and the $f_a^{PSP}$ functions:

\begin{lemma}
\label{psp_key}
Fix a function $g : \baire \to \baire$.
Then $D_g^{PSP}$ cannot contain a perfect subset.
\end{lemma}
\begin{proof}
Towards a contradiction,
 fix a perfect tree $T$ such that $[T] \subseteq D_g^{PSP}$.
Let $x \in \baire$ be such that
 $T_x = T$ and $s_x : [T] \to \baire$ is a surjection.
So by definition of the $f_a^{PSP}$ functions, we have
 $\{ f_a^{PSP}(x) : a \in [T] \} = \baire$.
Thus,
 $\{ f_a^{PSP}(x) : a \in D_g^{PSP} \} = \baire$.
In particular $g(x)$ is in this set, so fix
 $a \in D_g^{PSP}$ such that $f_a^{PSP}(x) = g(x)$.
Thus $f_a^{PSP} \cap g \not= \emptyset$,
 which contradicts $a$ being in $D_g^{PSP}$.
\end{proof}

\begin{cor}
\label{psp_implies_psi}
Assume the PSP.
Then for each $g : \baire \to \baire$,
 $D_g^{PSP}$ is countable.
Hence, $\Psi$ holds as witnessed by the family
 $\{ f_a^{PSP} : a \in \baire \}$.
\end{cor}
\begin{proof}
Assume, towards a contradiction,
 that there is some fixed $g$ such that $D_g^{PSP}$
 is uncountable.
Then $D_g^{PSP}$ has a perfect subset $[T]$.
This contradicts the lemma above.
\end{proof}

So the PSP implies each $D_g^{PSP}$ is countable,
 but unfortunately we have no proof that
 PSP implies each $D_g^{GC}$ is countable.
Instead, we have a proof that
 $D_g^{GC}$ is countable
 if either 1) $\ad^+$ holds (see Corollary~\ref{adplus_cor})
 or 2) we are in the Solovay model
 (see Corollary~\ref{gc_solovay_model}).

So it might seem that the family of $f_a^{PSP}$
 functions is strictly better than the family of $f_a^{GC}$
 functions.
However, we have the interesting phenomenon that
 in nearly all instances where we can prove $D_g^{GC}$ to be countable,
 we have a better bound on $D_g^{GC}$ than we do for $D_g^{PSP}$.
We explore this more in Section~\ref{section_comparing}.

Assuming $\ad^+$
 we will prove
 $$D^{PSP}_g \subseteq \mbox{HOD}_{\{Y\}}$$
 whenever $Y \subseteq \mbox{Ord}$
 is an $\infty$-Borel code for $g$.
On the other hand,
 still with $\ad^+$,
 in another section we will prove
 $$D^{GC}_g \subseteq L[Y]$$
 whenever $Y \subseteq \mbox{Ord}$ is an $\infty$-Borel code for $g$.
The rest of the section will focus on the former result.
\begin{proposition}[$\zf$]
\label{infty_borel_code_is_local}
Let $Y$ be a set of ordinals.
Let $A \subseteq \baire$ be $\textnormal{OD}_{\{Y\}}$
 in the model $L(Y,\mathbb{R})$,
 where this model satisfies $\ad^+$.
Then $A$ has an $\infty$-Borel code $S \subseteq \mbox{Ord}$
 in $\textnormal{HOD}_{\{Y\}}$.
\end{proposition}
\begin{proof}
This follows by Theorem 10.2.6
 in \cite{Larson_book}.
\end{proof}

We now have the following:
\begin{theorem}
Assume $\ad^+$.
Let $g : \baire \to \baire$ be a function.
Let $Y \subseteq \mbox{Ord}$ be an $\infty$-Borel
 code for $g$.
Then for any $a \in \baire$,
 $$[f_a^{PSP} \cap g = \emptyset] \rightarrow a \in \textnormal{HOD}_{\{Y\}}.$$
\end{theorem}
\begin{proof}
$D_g^{PSP}$ is $\mbox{OD}_{\{Y\}}$
 in $L(Y, \mathbb{R})$.
So $D_g^{PSP}$ has an $\infty$-Borel code
 $S \subseteq \mbox{Ord}$ in $\mbox{HOD}_{\{Y\}}$
 by Proposition~\ref{infty_borel_code_is_local}.
By the PSP, $D_g^{PSP}$ is countable.
Thus
 $D_g^{PSP} \subseteq \mbox{HOD}_{\{S\}}$
 by Fact~\ref{ad_countable_is_well_behaved}.
Since $S \in \mbox{HOD}_{\{Y\}}$
 we have
 $\mbox{HOD}_{\{S\}} \subseteq \mbox{HOD}_{\{Y\}}$.
Thus $D_g^{PSP} \subseteq \mbox{HOD}_{\{Y\}}$.
This is what we wanted to show.
\end{proof}

\section{Consistency Strength Lower Bound of $\zf + \Psi$}
\label{lowerboundsection}

In Section~\ref{section_zfc_implies_not_psi}
 we gave an argument that $\zfc$
 implies $\neg \Psi$.
That is,
 $\zfc$ implies
that every Borel family $\{ f_a : a \in \baire \}$
of functions from $\baire$ to $\baire$
can be avoided by some function $g : \baire \to \baire$.
Using that argument and
 being careful about the complexity
 of the objects being produced,
 we will show in this section that
 $V = L$ implies
 every Borel family $\{ f_a : a \in \baire \}$
 of functions from $\baire$ to $\baire$
 can be avoided by some ${\bf{\Delta}}^1_2$ function
 $g : \baire \to \baire$.
We will convert this into a conjecture that
 $\zf + \dc$ + ``there exists a Borel family that cannot
 be avoided'' implies that 
  $\omega_1$ is inaccessible
 in $L[r]$ for each $r \in \baire$.

\begin{remark}
Temporarily suppose $\Gamma$ is a pointclass
 closed under quantification of natural
 numbers.
Let $\Delta = \Gamma \cap \neg \Gamma$.
Let $g : \baire \to \baire$.
Consider the ternary relation ``$g(x)(n) = m$''.
Since
 $$g(x)(n) \not= m \Leftrightarrow
 (\exists i \in \omega)\, i \not= m
 \wedge g(x)(n) = i,$$
 we have that the ternary relation
 is in $\Gamma$ iff it is in the dual
 $\neg \Gamma$.
Since
 $$g(x) = y \Leftrightarrow (\forall n \in \omega)[
 (\forall m \in \omega)\, m = y(n) \rightarrow
 g(x)(n) = m],$$
 if the ternary relation ``$g(x)(n) = m$'' is in $\Gamma$
 then the binary relation ``$g(x) = y$'' is in $\Gamma$.
Similarly, since
 $$g(x)(n) = m \Leftrightarrow (\exists y \in \baire)[
 g(x) = y \wedge y(n) = m],$$
 $$g(x)(n) = m \Leftrightarrow (\forall y \in \baire)[
 g(x) = y \Rightarrow y(n) = m],$$
 if the binary relation is in $\Gamma$,
 then the ternary relation is in
 $\exists^\baire \Gamma$ and $\forall^\baire \Gamma$.
By what we said about the binary
 relation versus the ternary relation,
 we have that the following are equivalent:
Now fix an $1 \le n < \omega$.
\begin{itemize}
\item[1)] The binary relation ``$g(x) = y$'' is $\Sigma^1_n$.
\item[2)] The binary relation is $\Pi^1_n$.
\item[3)] The binary relation is $\Delta^1_n$.
\item[4)] the ternary relation ``$g(x)(n) = m$'' is $\Sigma^1_n$
\item[5)] the ternary relation is $\Pi^1_n$
\item[6)] the ternary relation is $\Delta^1_n$.
\end{itemize}


\end{remark}

Using a definition of \cite{mos},
 a well-ordering $\le$ of $\baire$ is called
 $\Gamma$-good iff it is in $\Gamma$
 and whenever $P$ is a binary $\Gamma$-relation,
 then the relations
 $Q(x,y) \Leftrightarrow (\exists x' \le x)\, P(x',y)$ and
 $R(x,y) \Leftrightarrow (\forall x' \le x)\, P(x',y)$
 are in $\Gamma$.
Note that if $\le$ is $\Gamma$-good,
 then it is also $\neg \Gamma$-good.
If $V = L[r]$ for some $r \in \baire$,
 then there is a $\Sigma^1_2(r)$-good
 well-ordering of $\baire$
 of order type $\omega_1$.

We will follow Remark~\ref{universal_set_codes}
 in the construction below.
That is,
 for $\alpha < \omega_1$,
 we will use codes to talk about the $c$-th
 ${\bf{\Sigma}}^0_\alpha$ function
 $h_c : \baire \to \baire$,
 where $c \in \baire$.
That is, we fix a universal ${\bf{\Sigma}}^0_\alpha$
 set and use its sections to get all the
 ${\bf{\Sigma}}^0_\alpha$
 functions from $\baire$ to $\baire$.

\begin{lemma}
\label{least_is_good}
Let $\le$ be a $\Gamma$-good well-ordering of $\baire$.
Let $P \subseteq \baire \times \baire$
 be a binary $\Delta$-relation
 such that $(\forall y \in \baire)(\exists x \in \baire)\, P(x,y)$.
Then the relation $P' \subseteq \baire \times \baire$,
 defined by $P'(x,y) := x$ is the $\le$-least real
 satisfying $P(x,y)$,
 is also a $\Delta$-relation.
\end{lemma}
\begin{proof}
We can assume that the relation $R(a,b) := (a = b)$ is $\Delta$.
$$P'(x,y) = P(x,y) \wedge
 (\forall x' \le x)[ x' \not= x \rightarrow \neg P(x,y)]$$
 is $\Gamma$, because $\neg P(x,y)$ is $\Gamma$
 (because $P$ is $\Delta$)
 and so
 $x' \not= x \rightarrow \neg P(x,y)$ is $\Gamma$
 and so on.
On the other hand,
 $$\neg P'(x,y) := \neg P(x,y) \vee (\exists x' \le x)[x' \not= x
 \wedge P(x',y)]$$
 is $\Gamma$,
 and so $P'$ is $\neg \Gamma$.
Thus $P'$ is $\Delta$.
\end{proof}

\begin{definition}
Fix a computable bijection from $\omega$
 to $\omega \times \omega$.
Given a relation $R \subseteq \omega \times \omega$,
 we may use that bijection to encode
 $R$ as a subset of $\omega$,
 which we can then identity as
 an element of $\baire$.
In this way,
 given a hereditarily countable set $S$,
 call $c \in \baire$ an $H(\omega_1)$ \textit{code} for $S$
 iff $c$ codes a binary relation
 $R \subseteq \omega \times \omega$
 that is isomorphic to the
 $\in$ relation on the transitive closure
 of $S \cup \{S\}$
 such that if we let
 $\pi_c : \langle \omega, R \rangle \to
 \langle \mbox{TC}(S \cup \{S\}), \in \rangle$
 be the isomorphism,
 then $\pi_c(0) = S$.
\end{definition}
Note that the set of all $H(\omega_1)$
 codes is a $\Pi^1_1$ set.
One point of $H(\omega_1)$ codes
 is to convert
 the quantification over the
 elements of a fixed
 hereditarily countable $S$
 to a number quantifier using
 a code for $S$ as a parameter.
Specifically,
 given an $H(\omega_1)$ code $c$
 for a hereditarily countable set $S$,
 the set $S \cap \baire$
 of reals in $S$ is
 $\Delta^1_1(c)$.

\begin{theorem}
Fix $2 \le n < \omega$.
Assume that $\ch$ holds.
Assume also there is a
 ${\bf{\Sigma}}^1_n$-good well-ordering $\le$ of $\baire$
 of order type $\omega_1$.
Fix $\alpha < \omega_1$.
There is an uncountable set $S \subseteq \baire$
 along with a
 ${\bf{\Delta}}^1_n$ function $H : \baire \to \baire$
 such that whenever $c \in \baire$ is a code
 for a ${\bf{\Sigma}}^0_\alpha$ function
 $h : \baire \to \baire$, then
 $H(c) \not\in h``S$.
That is, $H$ witnesses that no
 ${\bf{\Sigma}}^0_\alpha$ function surjects
 $S$ onto $\baire$.
\end{theorem}
\begin{proof}
For each $x \in \baire$,
 by induction on the $\le$-rank of $x$,
 define a pair $(a_x, y_x) \in \baire \times \baire$
 as follows.
Also, note by induction that each pair
 $(a_x,y_x)$ is unique.
\begin{itemize}
\item[1)] $y_x$ is the $\le$-least real such that
\begin{itemize}
\item[1a)] $h_x^{-1}(y_x)$ is meager;
\item[1b)] $(\forall x' \le x)\, x' \not= x
 \Rightarrow a_{x'} \not\in h_x^{-1}(y_x)$.
\end{itemize}
\item[2)] $a_x$ is the $\le$-least real such that
\begin{itemize}
\item[2a)] $(\forall x' \le x)\, x' \not= x
 \Rightarrow a_x \not= a_{x'}$;
\item[2b)] $(\forall x' \le x)\, a_x \not\in
 h_{x'}^{-1}(y_{x'})$.
\end{itemize}
\end{itemize}

Before we proceed, let us
 prove that $(a_x,y_x)$ exists.
Fix $x$ and suppose we have defined
 $(a_{x'},y_{x'})$ for all $x' < x$.
We will find the $(a_x,y_x)$.
First, we will define the $y_x$
 that works.
Since there are only countably many
 $y$'s such that
 $a_{x'} \in h_x^{-1}(y_x)$
 (because $\{ a_{x'} : x' < x \}$ is countable),
 we can just throw out those $y$'s
 when trying to satisfy 1b).
There are only countably many
 $y$'s such that $h_x^{-1}(y)$ is not meager.
This is because $h_x$ is Borel
 so each $h_x^{-1}(y)$ is Borel.
For each $y$ such that $h_x^{-1}(y)$
 is not meager,
 we may pick a non-empty open set $U_y$
 such that the symmetric difference
 between $h_x^{-1}(y)$ and $U_y$ is meager.
Now if there are uncountably many $y$'s
 such that $h_x^{-1}(y)$ is non-meager,
 then there must be two of the corresponding
 $U_y$'s whose intersection is non-empty
 (and open).
A contradiction easily follows.
Thus, we can throw away just countably many $y$'s
 to satisfy 1a).
To summarize, we threw away the countably many $y$'s
 that did not satisfy 1a) and we threw away
 the countably many $y$'s that did not satisfy 1b),
 so we are left with the cocountable set of $y$'s
 that satisfy both 1a) and 1b).
We pick $y_x$ to be the least such $y$.

Now we must pick an $a_x$ that satisfies 2).
Getting $a_x$ to satisfy 2a) is easy because
 $\{ a_{x'} : x' < x \}$ is countable.
That is,
 there is a cocountable
 (and hence comeager) set of $a$'s
 that work for 2a).
For 2b),
 since each $h_{x'}^{-1}(y_{x'})$ is meager,
 and the union of these for $x' \le x$
 is also meager and hence its complement
 is comeager.
Thus the set of $a_x$'s that work for
 2) is comeager, and so we can pick $a_x$
 to be the $\le$-least such one.

Let our set $S$ be $$S = \{ a_x : x \in \baire \}.$$
Note that by 2a), $S$ is uncountable.
Let $H : \baire \to \baire$ be the function
 $$H(x) := y_x.$$
We must now do two things:
 show that $H$ witnesses that no
 ${\bf{\Sigma}}^0_\alpha$ function surjects
 $S$ onto $\baire$,
 and show $H$ is ${\bf{\Delta}}^1_n$.
We will do the former first.

Note that by 1b) and 2b) together, we have
 $$(\forall x_1, x_2 \in \baire)\,
 a_{x_1} \not\in h_{x_2}^{-1}(y_{x_2}).$$ 
That is,
 $$(\forall x_1, x_2 \in \baire)\,
 h_{x_2}( a_{x_1} ) \not= y_{x_2}.$$ 
Now fix a ${\bf{\Sigma}}^0_\alpha$
 function $h : \baire \to \baire$.
We will show that $h$ does not surject
 $S$ onto $\baire$.
Fix an $x_2 \in \baire$
 such that $h_{x_2} = h$.
We now have
 $$(\forall x_1 \in \baire)\,
 h_{x_2}( a_{x_1} ) \not= y_{x_2}.$$
So $y_{x_2}$ is not in the range of
 $h \restriction S$.

The last thing we must do is show
 $H$ is ${\bf{\Delta}}^1_n$.
For each $c \in \baire$,
 let $F_c$ be the function
 $x \mapsto (a_x, y_x)$
 restricted to $\{ x : x \le c \}$.
Since $F_c$ is hereditarily countable,
 it has an $H(\omega_1)$ code.
Let $J : \baire \to \baire$ be the function
 defined by $J(c) :=$ the $\le$-least
 $H(\omega_1)$ code for $F_c$.
Consider the relation
 $R \subseteq \baire \times \baire$
 defined by
 $R(d,c) := $``$d$ is an $H(\omega_1)$
 code for $F_c$''.
We will show that
 $R$ is ${\bf{\Delta}}^1_n$.
It will follow that $J$ is
 ${\bf{\Delta}}^1_n$
 (by the proof of Lemma~\ref{least_is_good}),
 and so $H$ is ${\bf{\Delta}}^1_n$.

Note that the well-ordering
 $\le$ is in fact ${\bf{\Delta}}^1_n$.
In this paragraph we will show that
 the relation $R_1 \subseteq \baire \times \baire$
 defined by $R_1(d,c) :=$
 ``$d \in \baire$ is an $H(\omega_1)$
 code for $\{ x : x \le c \}$''
 is ${\bf{\Delta}}^1_n$.
Quantifying over the reals in the countable
 set coded by a $d$ is a number quantifier,
 not a real quantifier.
So,
 ``$(\forall x \in $ the set coded by
 $d)\, x \le c$'' is ${\bf{\Delta}}^1_n$.
On the other hand,
 ``$(\forall x \le c)\, x \in$ the set coded by
 $d$'' is ${\bf{\Delta}}^1_n$ because
 $\le$ is ${\bf{\Sigma}}^1_n$-good and
 ${\bf{\Pi}}^1_n$-good.
This shows that the relation $R_1$ is ${\bf{\Delta}}^1_n$.
Similarly $R_2(d,c) :=$
 ``$d \in \baire$ is an $H(\omega_1)$ code for a function from
 $\{ x : x \le c \}$ to $\baire \times \baire$''
 is ${\bf{\Delta}}^1_n$.

We will now prove that ``$R(d,c) := d$
 is an $H(\omega_1)$ code for 
 $F_c$ is ${\bf{\Delta}}^1_n$'',
 and this will complete the proof.
Because $\le$ is ${\bf{\Sigma}}^1_n$-good and
 ${\bf{\Pi}}^1_n$-good,
 it suffices to show that 1) and 2) are
 ${\bf{\Delta}}^1_n$.
First, consider 1a).
Each set $h_x^{-1}(y_x)$ is Borel,
 and we can uniformly get a code
 for this set from $x$ and $y_x$.
Given $\beta < \omega_1$,
 whether or not a code for a ${\bf{\Sigma}}^0_\beta$ set
 codes a meager set is certainly
 ${\bf{\Delta}}^1_n$.
Next, since ``$a_{x'} \not\in h_x^{-1}(y_x)$'' is
 ${\bf{\Delta}}^1_n$
 and $\le$ is ${\bf{\Sigma}}^1_n$-good and
 ${\bf{\Pi}}^1_n$-good
 we have that 1b) is
 ${\bf{\Delta}}^1_n$.
So, the conjunction of 1a) and 1b) is
 ${\bf{\Delta}}^1_n$.
The property of being the $\le$-least real that satisfies
 a ${\bf{\Delta}}^1_n$ relation is ${\bf{\Delta}}^1_n$,
 so it follows that 1) is ${\bf{\Delta}}^1_n$.

Now ``$a_x \not= a_{x'}$'' is certainly ${\bf{\Delta}}^1_n$,
 so 2a) is ${\bf{\Delta}}^1_n$
 because $\le$ is ${\bf{\Sigma}}^1_n$-good and
 ${\bf{\Pi}}^1_n$-good.
Similarly, 2b) is ${\bf{\Delta}}^1_n$.
Now the conjunction of 2a) and 2b) is ${\bf{\Delta}}^1_n$,
 and so 2) is ${\bf{\Delta}}^1_n$ as well.
\end{proof}

\begin{cor}
\label{moregood}
Fix $2 \le n < \omega$.
Assume $\ch$ holds
 and there is a ${\bf{\Sigma}}^1_n$-good
 well-ordering $\le$ of $\baire$
 of order type $\omega_1$.
Let $\{ f_a : a \in \baire \}$
 be a Borel family of functions from
 $\baire$ to $\baire$.
Then there is a ${\bf{\Delta}}^1_n$ function
 $g : \baire \to \baire$ that avoids
 the family.
\end{cor}
\begin{proof}
Fix $\alpha < \omega_1$ such that each function
 $a \mapsto f_a(x)$ is ${\bf{\Sigma}}^0_\alpha$.
Let $S$ and $H$ be from the Lemma above.
Define $g : \baire \to \baire$ as follows.
There is a Borel function $x \mapsto c_x$
 such that for each
 $x \in \baire$,
 the real $c_x$ is a code for the function
 $a \mapsto f_a(x)$.
Fix such a function.
Now for all $x \in \baire$,
 we have $H(c_x) \not\in \{ f_a(x) : a \in S \}$.
Define $g(x) := H(c_x)$.
We now have for each $a \in S$
 that $f_a \cap g = \emptyset$.
Thus since $S$ is uncountable,
 $g$ avoids the family $\{ f_a : a \in \baire \}$.
Also,
 one can check that $g$ is in fact
 ${\bf{\Delta}}^1_n$.
\end{proof}

\begin{remark}
Here are some ways to apply the corollary above.
First, $V = L$ implies there is a
 $\Sigma^1_2$-good well ordering of $\baire$
 of order type $\omega_1$.
Going up the large cardinal hierarchy,
 if $\mbox{LC}$ is a large cardinal axiom consistent
 with there being a $\Sigma^1_n$-good
 well-ordering of $\baire$ of order type $\omega_1$
 (for some fixed $n < \omega$),
 then in such a model we have that
 for each Borel family
 $\{ f_a : a \in \baire \}$ of functions
 from $\baire$ to $\baire$,
 there is a ${\bf{\Delta}}^1_n$ function $g : \baire \to \baire$
 that avoids the family.
So for example, assuming there are only finitely many
 Woodin cardinals does not imply that
 every projective function avoids
 $\{ f_a^{PSP} : a \in \baire \}$.
See \cite{Steel} for a discussion of the mouse
 $M^\#_n$ which has $n \in \omega$
 Woodin cardinals
 but at the same time
 a $\Delta^1_{n+2}$-good well-ordering of
 $\mathbb{R}$.
\end{remark}

We would like to show that if there is a Borel
 family of functions that cannot be avoided by
 a projective function,
 then there is an inner model with an
 inaccessible cardinal.
However our argument relies on the following
 conjecture:
\begin{conjecture}
\label{dist_conjecture}
Let $\{ f_a : a \in \baire \}$
 be a Borel family of functions.
Let $x \in \mathbb{R}$ be such that
 $\omega_1 = \omega_1^{L[x]}$.
Let $S \subseteq \baire$ be a set of reals
 that is in $L[x]$ and is uncountable there.
Then if in $L[x]$ there is a $\mathbf{\Delta}^1_2$
 function $g$ that is disjoint from
 $f_a \restriction L[x]$
 for each $a \in S$,
 then there is a projective function
 $g^+ : \baire \to \baire$ that extends $g$ (in $V$)
 that is disjoint from $f_a$ for each $a \in S$.
\end{conjecture}


\begin{theorem}
\label{fixed_lowerboundcor}
Assume $\dc$.
Assume Conjecture~\ref{dist_conjecture}.
Assume there is a Borel family
 $\{ f_a : a \in \baire \}$ such that
 no projective function
 $g : \baire \to \baire$
 can avoid this family.
Then $(\forall r \in \baire)\,
 r$ is inaccessible in $L[r]$.
\end{theorem}
\begin{proof}
Fix a Borel family $\{ f_a : a \in \baire \}$.
Fix $b \in \baire$ such that the function
 $(a,x) \mapsto f_a(x)$ is $\Delta^1_1(b)$.
Since we are assuming $\zf + \dc$,
 the statement $(\forall r \in \baire)\,\omega_1$
 is inaccessible in $L[r]$ is equivalent to
 the statement $(\forall r \in \baire)\,
 \omega_1^{L[r]} < \omega_1$
 \cite{kan}.
We will prove the contrapositive.
That is, fix $r \in \baire$ such that
 $\omega_1^{L[r]} = \omega_1$.
So we also have $\omega_1^{L[r,b]} = \omega_1$.
We will construct a projective function
 that is disjoint from uncountably many of the
 $f_a$ functions
 (so the projective function avoids the family
 of $f_a$ functions).

Note that in $L[r,b]$,
 there is a ${\bf{\Sigma}}^1_2$-good
 well-ordering of $\baire$
 of order type $\omega_1$.
Apply Corollary~\ref{moregood} above in $L[r,b]$
 to get $S \subseteq \baire \cap L[r,b]$
 uncountable (in $L[r,b]$)
 and let $g \in L[r,b]$ be
 ${\bf{\Delta}}^1_2$ (in $L[r,b]$)
 such that $g$ is disjoint (in $L[r,b]$)
 from each $f_a$ for $a \in S$.
By Conjecture~\ref{dist_conjecture}
 fix a projective function
 $g^+ : \baire \to \baire$ (in $V$)
 that is disjoint (in $V$)
 from $f_a$ for each $a \in S$.
Thus, the projective function
 $g^+$ avoids the family
 $\{ f_a : a \in \baire \}$
 which is what we wanted to show.
\end{proof}

\begin{cor}
\label{fixed_lower_bound_for_psi}
Assume $\dc$.
Assume Conjecture~\ref{dist_conjecture}.
Then $\Psi$ implies that
 $(\forall r \in \baire)\,$
 $\omega_1$ is inaccessible in $L[r]$.
\end{cor}

\section{$f_a^{GC}$ and $\mbb{H}$}
\label{Review}

In this section we will
 review the technology of the
 \textit{Generic Coding with Help} method.
A key ingredient is a classical technique
 for generating an infinite subset of $\omega$
 that is computable from every infinite
 subset of itself
 (such a set is called \textit{introreducible}).
We review this first:
\begin{proposition}
Let $X \subseteq \omega$.
There is an infinite $Y \subseteq \omega$
 such that $X$ is computable from every
 infinite subset of $Y$.
Moreover, $Y$ can be taken to be Turing
 equivalent to $X$.
\end{proposition}
\begin{proof}
Let $\chi : \omega \to 2$
 be the characteristic function of $X$.
Let $p_0, p_1, p_2, ...$
 be the increasing enumeration of all
 the prime numbers.
Let $Y \subseteq \omega$ be the set of all
 numbers of the form
 $p_0^{\chi(0)} p_1^{\chi(1)} ... p_n^{\chi(n)}$
 for all $n \in \omega$.
Then $Y$ is as desired.
This encoding trick is sometimes called
 ``stuttering''.
Indeed, we can see that given any
 $m \in Y$,
 by finding the prime factorization of $m$
 we can read off an initial segment of $\chi$.
If we have infinitely many such $m$'s,
 then we can recover all of $\chi$.
It is not hard to see that $X$
 and $Y$ are Turing equivalent.
\end{proof}

Once and for all,
 fix a Borel function
 that maps each real $a \in \baire$
 to an infinite set $A_a \subseteq \omega$
 such that 1) $a$ and $A_a$ are Turing equivalent
 and 2) $A_a$ is computable from every
 infinite subset of itself.
Now for each $a \in \baire$,
 we will define the function $f_a^{GC} : \baire \to \baire$.
\begin{definition}
Fix a computable function
 $\theta : \omega \to \omega$ such that
 $$(\forall m \in \omega)\, \theta^{-1}(m)
 \mbox{ is infinite. }$$
Given an $a \in \baire$,
 let $e_a : \omega \to A_a$ be the strictly increasing
 enumeration of $A_a$.
Let $\eta_a : A_a \to \omega$ be the function
 $\theta \circ e_a^{-1}$.
\end{definition}
Note that for each $m \in \omega$,
 $\eta_a^{-1}(m) \subseteq A_a$ is infinite.
\begin{definition}
The function $f_a^{GC} : \baire \to \baire$
 is defined as follows: 
Given $x = \langle x_0, x_1, ... \rangle \in \baire$,
 let $i_0 < i_1 < ...$ be the indices $i$
 such that $x_i \in A_a$.
Define $f_a^{GC}$ to be
 $$f_a^{GC}(x) := \langle
 \eta_a(x_{i_0}),
 \eta_a(x_{i_1}), ...
 \rangle.$$
If there are only finitely many
 indices $i$ such that $x_i \in A_a$,
 then define $f_a^{GC}(x)$
 to be all $0$'s after these finitely
 many indices.
\end{definition}

\begin{remark}
Note that
 $(a,x) \mapsto f_a^{GC}(x)$
 is Borel.
\end{remark}

To see how the coding works,
 consider a node $t \in \bairenodes$.
Let $n \in \omega$ be the number of
 $l \in \dom(t)$ such that
 $t(l) \in A_a$.
All $x \in \baire$ that extend $t$
 agree up to the first $n$ values of $f_a(x)$,
 but not at the $(n+1)$-th value.
By extending $t$ by one
 to get $t ^\frown k$ for some $k \in A_a$,
 we can decide the $(n+1)$-th value
 of $f_a(x)$
 to be anything we want.
Even if there is a finite set $S$
 of $k$ which we are not allowed to pick,
 we can still create a $t ^\frown k$
 where the $(n+1)$-th value
 of $f_a(x)$ is anything we want.

The poset $\mbb{H}$,
 a variant of Hechler forcing,
 is equivalent to
 the forcing which consists of
 trees $T \subseteq \bairenodes$
 with co-finite splitting after the stem,
 where the ordering $\le$ is reverse inclusion.
We present $\mbb{H}$
 as consisting of pairs $(t,h)$ such that
 $t \in \bairenodes$ and
 $h : \bairenodes \to \omega$,
 where $t$ specifies the stem
 and $h$ specifies where each node
 beyond the stem
 has a final segment of successors.
That is, we have
 $(t',h') \le (t,h)$ iff
 $h' \ge h$ (everywhere domination),
 $t' \sqsupseteq t$, and
 for each $n \in \dom(t') - \dom(t)$,
 $$t'(n) \ge h(t' \restriction n).$$

Given a set $A \subseteq \omega$,
 there is also a stronger ordering
 $\le^A$ defined by
 $(t',h') \le^A (t,h)$ iff
 $(t',h') \le (t,h)$
 and for each $n \in \dom(t') - \dom(t)$,
 $$t'(n) \not\in A.$$
Informally,
 $q \le^A p$ iff $q \le p$
 and the stem of $q$ does not
 ``hit'' $A$ any more than $p$ already does.
We will also use the main lemma
 from \cite{Hathaway},
 which tells us a situation where we can
 hit a dense subset of $\mbb{H}$
 by making a $\le^A$ extension.
By an $\omega$-model we mean a model of $\zf$
 that is possibly ill-founded but
 whose $\omega$ is well-founded
 (and so equal to the true $\omega$).
Moreover, this next lemma
 only needs $M$ to satisfy a fragment of $\zf$.
\begin{lemma}(Main Lemma)
\label{mainlemma}
Let $M$ be an $\omega$-model
 of $\zf$ and
 $D \in \mc{P}^M(\mbb{H}^M)$
 a set dense$^M$ in $\mbb{H}^M$.
Let $A \subseteq \omega$ be infinite and
 $\Delta^1_1$ in every infinite subset of itself,
 but $A \not\in M$.
Then
 $$(\forall p \in \mbb{H}^M)
   (\exists p' \le^A p)\, p' \in D.$$
\end{lemma}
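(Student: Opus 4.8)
The plan is to prove the contrapositive: fixing $p = (t,h) \in \mbb{H}^M$ and a set $D \in \mc{P}^M(\mbb{H}^M)$ dense$^M$ in $\mbb{H}^M$, I assume that the $\le^A$-cone below $p$ misses $D$, i.e.\ that no $p' \le^A p$ lies in $D$, and I derive $A \in M$. A preliminary observation drives everything: since $M \models \zf$ is an $\omega$-model, every finite subset of $\omega$ belongs to $M$, so $A$ cannot be cofinite (otherwise $A \in M$); hence $\omega \setminus A$ is infinite. Consequently, above any bound and above any value of $h$ there are legal stem-values lying outside $A$, so $\le^A$-extensions of any reachable stem always exist and the tree of $\le^A$-reachable stems below $p$ is infinitely branching. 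Note also that any finite $s \sqsupseteq t$ whose new values are $\ge h(s \restriction n)$ gives a genuine condition $(s,h) \in \mbb{H}^M$, since $s$ is a real finite sequence and $h \in M$.

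Next I run a rank analysis in $V$. Let $T_A$ be the tree of $A$-legal stems $s \sqsupseteq t$ (each new value $\ge h(s \restriction n)$ and $\notin A$) such that no $s'$ with $t \subseteq s' \subseteq s$ admits a $g \ge h$ with $(s',g) \in D$; these are the stems that have not yet entered $D$. If $t$ lies in the well-founded part of $T_A$, I descend along strictly decreasing rank: any node that has not entered $D$ has infinitely many $A$-legal one-step extensions, each of which has either already entered $D$ or has strictly smaller rank. Following children of strictly smaller rank, the process terminates at a stem all of whose infinitely many $A$-legal one-step extensions have already entered $D$; extending by any one of them gives a stem $s^*$ with $(s^*,g) \in D$ for some $g \ge h$. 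Since every step used a value outside $A$, the condition $(s^*,g) \le^A p$ lies in $D$, contradicting our assumption. Therefore $t$ lies in the ill-founded part of $T_A$: there is an infinite $A$-legal branch $x$ all of whose initial segments avoid $D$.

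The heart of the argument -- and the step I expect to be the main obstacle -- is to convert this failure into an infinite subset of $A$ that actually lies in $M$. The idea is to play density in $M$ against the branch. For each initial segment $x \restriction m$ (a genuine element of $M$), density gives some $(s',g') \in D$ below $(x \restriction m, h)$; since $x \restriction m$ is $A$-legal while the cone misses $D$, at least one of the finitely many new values of $s'$ beyond $x \restriction m$ must lie in $A$. Thus $M$ can read off, level by level, a finite nonempty set of candidate values guaranteed to meet $A$, and I assemble these into a finitely branching tree inside $M$; applying K\"onig's lemma in $M$ yields an infinite $B \in M$, and the hypothesis that the cone misses $D$ is precisely what should force a suitable such branch to be an infinite subset of $A$. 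Pinning the branch down to lie inside $A$ -- that is, resolving inside $M$ the ambiguity of which candidate value is the genuine $A$-value, which $M$ cannot decide directly -- is where the real work lies and is the main difficulty.

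Finally, once an infinite $B \subseteq A$ with $B \in M$ is in hand, I invoke the defining property of $A$: it is $\Delta^1_1$ in every infinite subset of itself, hence $\Delta^1_1$ in $B$. Because $M$ is an $\omega$-model containing $B$, the reduction computing $A$ from $B$ is absolute between $M$ and $V$ -- transparently so in the base case where the reduction is a Turing reduction, and via Mostowski ($\Sigma^1_1$ and $\Pi^1_1$) absoluteness for $\omega$-models in the $\Delta^1_1$ case -- so $A \in M$, contradicting $A \notin M$. This completes the contrapositive and hence the lemma. The rank descent of the second paragraph and the closing absoluteness step are routine; the only genuinely delicate point is the extraction of $B$ inside $M$.
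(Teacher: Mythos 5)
Your proposal does not actually prove the lemma: the step you yourself flag as ``where the real work lies'' --- extracting an infinite $B \subseteq A$ with $B \in M$ --- is exactly the substantive content of this lemma (the surrounding steps being routine), and your sketch of it would not survive being made precise. First, the candidate tree you propose is not definable inside $M$: its levels are indexed along the branch $x$, which is constructed in $V$ using knowledge of $A$ and is not an element of $M$ (only its finite initial segments are), so $M$ cannot ``read off, level by level'' anything along $x$. To get an object of $M$ you would have to define the candidate sets uniformly over \emph{all} stems, but for stems that are not $A$-avoiding the hypothesis gives no constraint, and the resulting tree carries no guarantee that its nodes meet $A$. Second, even granting the tree, K\"onig's lemma in $M$ only yields \emph{some} branch choosing one candidate per level; each level is a finite set merely \emph{meeting} $A$, so the chosen values need not lie in $A$, and $M$ has no way to prefer the genuine $A$-values --- this is precisely the ambiguity you concede you cannot resolve. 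A correct proof has to work with sets that are \emph{provably contained in} $A$ yet \emph{computable in} $M$; for instance, note that if $s$ is an $A$-avoiding legal stem and the $\le^A$-cone below $p$ misses $D$, then the one-step ``exit set'' $\{ k \ge g(s) : (\exists g' \ge g)\, (s ^\frown k, g') \in D \}$ lies in $M$ and is a subset of $A$. The difficulty --- and the heart of the argument in the cited paper \cite{Hathaway}, where this lemma is proved --- is to turn such exit sets into an infinite subset of $A$, handling the case where they are all finite (one must then dominate past them and analyze deeper entrances into $D$); nothing in your proposal addresses this.

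Two smaller points. Your second paragraph's rank descent is superfluous: under the standing assumption, \emph{no} $A$-legal stem $s'$ admits any $g \ge h$ with $(s',g) \in D$, since such a pair would itself be a $\le^A$-extension of $p$ in $D$; hence $T_A$ is simply the full tree of $A$-legal stems and is trivially ill-founded with no case analysis needed. On the positive side, your closing step is correct: once an infinite $B \subseteq A$ with $B \in M$ is in hand, $A$ is $\Delta^1_1(B)$, and $\Sigma^1_1$/$\Pi^1_1$ absoluteness for $\omega$-models (equivalently, admissibility of the well-founded part of $M$) puts $A$ in $M$, contradicting $A \notin M$. So the frame of your contrapositive is right, but the lemma's actual mathematical content is deferred rather than proved.
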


\section{Abstract $f_a^{GC}$ Theorem}

The point of this next theorem is that
 if a model $M$ of $\zf$ can understand
 a function $g : \baire \to \baire$
 on all its generic extensions
 by the $\mathbb{H}$ poset,
 and if $a \in \baire - M$,
 then we can build a real $x$
 that is $\mathbb{H}$-generic over $M$
 such that $f_a(x) = g(x)$.
This is proved using the
 Generic Coding with Help method
 described in the previous section.
\begin{theorem}(ZF)
\label{abstract_thm}
Let $M$ be a transitive model of $\zf$
 such that $\p^M(\mathbb{H}^M)$ is countable.
Let $g : \baire \to \baire$.
Let $\dot{\tau}$ be an $\mathbb{H}^M$-name
 for a function from $\baire \times \omega$ to $\omega$
 such that for every
 $G \subseteq \mathbb{H}^M$ that is
 $\mathbb{H}^M$-generic over $M$,
 if we let $x = \bigcup \{ t : (\exists h)\, (t,h) \in G \}$,
 then $(\forall n \in \omega)\, \dot{\tau}_G(x)(n) = g(x)(n)$.
Then for all $a \in \baire$,
 $$[f_a \cap g = \emptyset]
 \Rightarrow
 a \in M.$$
\end{theorem}
\begin{proof}
Fix $a$ and assume $a \not\in M$.
We must construct an $x \in \baire$
 such that $f_a(x) = g(x)$.
Since $a$ and $A_a$ are Turing equivalent,
 we have $A_a \not\in M$,
 which allows us to apply
 Lemma~\ref{mainlemma}, the Main Lemma.
By hypothesis, $\mc{P}^{M}(\mbb{H}^{M})$ is
 countable, so fix an enumeration
 $\langle D_n \in \mc{P}^{M}(\mbb{H}^{M}) :
 n \in \omega \rangle$
 of the dense subsets of $\mbb{H}^{M}$ in $M$.

We will construct a generic $G$ for
 $\mbb{H}^{M}$ over $M$.
Let $\dot{x}$ be the canonical name for $x$.
The forcing extension will be
 $M[G] = M[x]$.

First, apply Lemma~\ref{mainlemma} to get
 $p_0 \le^{A_a} 1$ such that $p_0 \in D_0$.
Next, apply Lemma~\ref{mainlemma} to get
 $p_0' \le^{A_a} p_0$ and $m_0 \in \omega$ such that
 $$p_0' \forces \dot{\tau}(\dot{x})(0) = \check{m}_0.$$
Now we have that
 $p_0' \le^{A_a} p_0 \le^{A_a} 1$
 and so
 we have that none of the numbers on the stem of $p_0'$
 are elements of $A$.
That is, $p_0'$ has ``not hit $A$ yet'',
 and so our final value of $f_a(x)(0)$ can be anything.
Now if we let $k \in A$, we can extend $p_0'$
 so that the new stem is $\mbox{Stem}(p_0') ^\frown k$
 and this will define $f_a(x)(0)$.
So, extend the stem of $p_0'$ by one to get
 $p_0'' \le p_0'$ in a way to ensure that
 $f_a(x)(0) = m_0$.

Next, apply Lemma~\ref{mainlemma} to get
 $p_1 \le^{A_a} p_0''$ such that $p_1 \in D_1$.
Next, apply Lemma~\ref{mainlemma} to get
 $p_1' \le^{A_a} p_1$ and $m_1 \in \omega$ such that
 $$p_1' \forces \dot{\tau}(\dot{x})(1) = \check{m}_1.$$
Next, extend the stem of $p_1'$ by one to get
 $p_1'' \le p_1'$ in a way to ensure that
 $f_a(x)(1) = m_1$.

Continue like this infinitely.
Since we have constructed a generic $G$ over $M$,
 we have that for each $i < \omega$,
 $$M[x] \models \dot{\tau}_G(x)(i) = m_i.$$
So by the hypothesis on $\dot{\tau}$, we have
 $$g(x)(i) = m_i$$ for all $i$.
On the other hand, we have ensured that
 for each $i < \omega$,
 $$f_a(x)(i) = m_i.$$
Thus, $f_a(x) = g(x)$.
Hence, $f_a$ and $g$ do not have disjoint graphs,
 which is what we wanted to show.
\end{proof}

\section{$\{ f_a^{GC} : a \in \baire \}$ Cannot Be Avoided
 In The Solovay Model}
\label{upperboundsection}

In this section we will show that
 $\{ f_a^{GC} : a \in \baire \}$ cannot be avoided in the
 Solovay model.

\begin{theorem}
Let $M$ be an inner model of $\zfc$ and let
 $\kappa$ be a strongly inaccessible
 cardinal in $M$.
Assume $V = M[G]$ where
 $G$ is generic for the Levy collapse
 of $\kappa$ over $M$.
Fix $C \in {^\omega \mbox{Ord}}$
 and let $g : \baire \to \baire$ be such that
 there is a formula $\varphi$ such that
 for each $x \in \baire$ and $n,m \in \omega$,
 $$g(x)(n) = m \Leftrightarrow \varphi(C,x,n,m).$$
Then for all $a \in \baire$,
 $$[f_a^{GC} \cap g = \emptyset] \Rightarrow a \in M[C].$$
\end{theorem}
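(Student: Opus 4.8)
The plan is to run the construction of Theorem~\ref{infborthm} with $L[C]$ replaced by $M[C]$, the one new difficulty being that $g$ is now defined by evaluating $\varphi$ in the \emph{ambient} model $V=M[G]$ rather than inside $L[C,x]$. First I would assume $a\not\in M[C]$ and, since $a$ and $A$ are Turing equivalent, note $A\not\in M[C]$; the goal is to build $x\in\baire$ that is $\mbb{H}^{M[C]}$-generic over $M[C]$ with $f_a(x)=g(x)$, contradicting $f_a\cap g=\emptyset$. To start the construction I need $\mc{P}^{M[C]}(\mbb{H}^{M[C]})$ to be countable in $V$. This no longer follows from the lemma on ``no injection of $\omega_1$ into $\baire$'', since $V$ satisfies $\zfc$; instead it follows directly from the Levy collapse. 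A small forcing preserves the inaccessibility of $\kappa$, so $\kappa$ is inaccessible in $M[C]$ and $\mc{P}^{M[C]}(\mbb{H}^{M[C]})$ has size $<\kappa$ there, while $\kappa=\omega_1^V$ and $\textnormal{Col}(\omega,<\kappa)$ makes every ordinal below $\kappa$ countable in $V$. Thus I may enumerate the dense subsets of $\mbb{H}^{M[C]}$ lying in $M[C]$ and apply the Main Lemma, exactly as before, using $\le^A$ extensions to hit them without disturbing the already-fixed values of $f_a(x)$.

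The essential new ingredient is a Solovay-style homogeneity bridge that plays the role $\infty$-Borelness played in Theorem~\ref{infborthm}:
\[
V\models\varphi(C,x,n,m)\iff M[C,x]\models\big(1\forces_{\textnormal{Col}(\omega,<\kappa)}\varphi(\check C,\check x,\check n,\check m)\big),
\]
for any $x$ that is $\mbb{H}^{M[C]}$-generic over $M[C]$ and lies in $V$. To justify this I would first observe that $M[C,x]=M[b]$ for a set of ordinals $b$ coding the pair $(C,x)$, so by the intermediate model theorem $M[C,x]$ is itself a set-forcing extension of $M$ and $V$ is a set-forcing extension of $M[C,x]$. Since $\mbb{H}$ is small, $\kappa$ is still inaccessible in $M[C,x]$, and the quotient forcing taking $M[C,x]$ to $V$ collapses every cardinal in $[\omega_1,\kappa)$ to $\omega$ and makes $\kappa=\omega_1$; by the uniqueness and absorption properties of the Levy collapse this quotient is forcing-equivalent to $\textnormal{Col}(\omega,<\kappa)^{M[C,x]}$. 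Hence $V$ is a $\textnormal{Col}(\omega,<\kappa)$-extension of $M[C,x]$, and weak homogeneity of the collapse lets the trivial condition decide $\varphi$ with the ground-model parameters $C,x,n,m$, which is exactly the displayed equivalence.

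With the bridge in hand the construction is a direct adaptation of Theorem~\ref{infborthm}. Working in $M[C]$, for each $n$ I define a dense set $D_n^\varphi\subseteq\mbb{H}^{M[C]}$ consisting of conditions that decide, via the $\mbb{H}$-forcing relation, the unique $m$ for which $1\forces_{\textnormal{Col}(\omega,<\kappa)}\varphi(\check C,\dot x,\check n,\check m)$ holds in the $\mbb{H}$-extension (totality of $g$ in $V$ together with homogeneity of the collapse guarantees such an $m$ exists and is unique for the generic being built). I then interleave, at stage $n$: a $\le^A$ extension into the $n$-th dense set of the genericity enumeration, a $\le^A$ extension into $D_n^\varphi$ pinning down the value $m_n$, and finally a one-step stem extension by an element of $A$ forcing $f_a(x)(n)=m_n$, exactly as in Theorem~\ref{infborthm}; the $\le^A$ steps add no elements of $A$ to the stem and so leave earlier values of $f_a(x)$ untouched. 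The resulting $x$ is $\mbb{H}^{M[C]}$-generic over $M[C]$, so $M[C,x]\models 1\forces_{\textnormal{Col}(\omega,<\kappa)}\varphi(\check C,\check x,\check n,\check m_n)$ for every $n$; by the bridge $g(x)(n)=m_n=f_a(x)(n)$ for all $n$, whence $f_a(x)=g(x)$, contradicting $f_a\cap g=\emptyset$. Therefore $a\in M[C]$.

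I expect the main obstacle to be the bridge, and specifically the identification of $V$ as a $\textnormal{Col}(\omega,<\kappa)$-generic extension of the local ground $M[C,x]$: this is where the inaccessibility of $\kappa$ and the homogeneity and absorption properties of the Levy collapse are genuinely used, and it is the step that replaces the Shoenfield absoluteness used in the $\Delta^1_2$ case and the $\infty$-Borel decoding used in Theorem~\ref{infborthm}.
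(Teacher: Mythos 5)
Your proposal is correct and follows essentially the same route as the paper's proof: the paper likewise uses Solovay's factoring of the Levy collapse for countable sets of ordinals plus homogeneity to replace $\varphi$ by $\tilde{\varphi}(C,x,n,m) := \bigl(1 \forces \varphi(\check{C},\check{x},\check{n},\check{m})\bigr)$ evaluated in $M[C,x]$, observes that $\mc{P}^{M[C]}(\mbb{H}^{M[C]})$ is countable because $\omega_1 = \kappa$ is inaccessible in $M[C]$, and then runs the $\le^A$-interleaved generic construction of Theorem~\ref{infborthm} unchanged. Your additional justification of the factoring step (intermediate models, absorption, and uniqueness of the collapse) merely fills in what the paper cites as ``the factoring of the Levy collapse for countable sets of ordinals,'' so there is no substantive difference.
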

\begin{proof}
Given any $x \in \baire$,
 by the factoring of the Levy collapse
 for countable sets of ordinals
 (see Corollary 26.11 in \cite{Jech}),
 $V$ is a generic extension of $M[C,x]$
 by the Levy collapse of $\kappa$,
 and $\omega_1 = \kappa$
 is inaccessible in $M[C,x]$.
Since the Levy collapse is homogeneous,
 for any $x,n,m$ we have
 $$\varphi(C,x,n,m) \Leftrightarrow
 M[C,x] \models 1 \forces
 \varphi(\check{C},\check{x},\check{n},\check{m}).$$
Letting $\tilde{\varphi}(C,x,n,m)$ be the formula
 $1 \forces \varphi(\check{C},\check{x},\check{n},\check{m})$,
 we have
 $$g(x)(n) = m \Leftrightarrow
 M[C,x] \models \tilde{\varphi}(C,x,n,m).$$
This shows that $M[C]$ can understand $g$ on its
 forcing extensions by the $\mathbb{H}^{M[C]}$ forcing.
Note also that $\mc{P}^{M[C]}(\mbb{H}^{M[C]})$ is countable,
 because $\omega_1 = \kappa$ is inaccessible in $M[C]$.
We can now quote Theorem~\ref{abstract_thm}
 using the model $M[C]$
 and we are done.
\end{proof}

Note that in the theorem above,
 $M[C] \cap \baire$ is countable,
 so $g$ can be disjoint from only countably
 many of the $f_a^{GC}$ functions.

\begin{cor}
\label{gc_solovay_model}
Let $\kappa$ be an inaccessible cardinal.
Let $G$ be generic for the Levy collapse of $\kappa$
 over $V$.
Then
 $$\textnormal{HOD}({^\omega \mbox{Ord}})^{V[G]} \models
 (\forall g : \baire \to \baire)\,
 g \mbox{ cannot avoid } 
 \{ f_a^{GC} : a \in \baire \}.$$
\end{cor}

\section{$\ad^+$ Implies
 $\{ f_a^{GC} : a \in \baire \}$ Cannot Be Avoided}

We showed that $\zf + \psp$ implies $\Psi$
 in Section~\ref{psp_section}.
Thus a consistency strength upper bound for $\zf + \Psi$
 is one inaccessible cardinal, because the $\psp$
 holds in the Solovay model.
Also, $\ad$ implies the $\psp$,
 so $\Psi$ holds in all models of $\ad$
 (and hence in all models of $\ad^+$).
The way we showed that the $\psp$ implies $\Psi$
 is by showing that the $\psp$
 implies that
 $\{ f_a^{PSP} : a \in \baire \}$
 cannot be avoided.

The point of this section is to show that
 $\ad^+$ implies that
 $\{ f_a^{GC} : a \in \baire \}$
 cannot be avoided.
Previously in Section~\ref{upperboundsection}
 we showed that
 $\{ f_a^{GC} : a \in \baire \}$
 cannot be avoided in the Solovay model.

We will use the following well known fact.
\begin{fact}
Assume there is no injection
 from $\omega_1$ into $\baire$.
 Let $M$ be an inner model of $\zfc$.
Then $\omega_1^V$ is a strong limit cardinal in $M$.
Since $\beth_\omega$ is
 the first strong limit cardinal
 and $|\mathbb{H}| < \beth_\omega$,
 it follows that
 $\mc{P}^M(\mbb{H}^M)$ is countable (in $V$).
\end{fact}

\begin{theorem}
\label{infborthm}
Assume there is no injection
 from $\omega_1$ into $\baire$.
Let $g : \baire \to \baire$ be $\infty$-Borel
 with code $C \subseteq \mbox{Ord}$.
Then for all $a \in \baire$,
 $$[f_a^{GC} \cap g = \emptyset]
 \Rightarrow
 a \in L[C].$$
Hence, $g$ cannot avoid
 $\{ f_a^{GC} : a \in \baire \}$.
\end{theorem}
\begin{proof}
Use Theorem~\ref{abstract_thm}
 with $M = L[C]$.
To see that the hypotheses are satisfied,
 note that by the nature of $\infty$-Borel codes,
 $M$ can understand $g$ on all of its
 forcing extensions.
\end{proof}

\begin{cor}
\label{adplus_cor}
Assume $\ad^+$.
No function $g : \baire \to \baire$
 can avoid
 $\{ f_a^{GC} : a \in \baire \}$.
\end{cor}
\begin{proof}
$\ad^+$ implies that every set of reals
 is $\infty$-Borel, and hence that every
 $g : \baire \to \baire$ is $\infty$-Borel.
Also $\ad^+$ implies $\ad$,
 which in turn implies there is no injection
 of $\omega_1$ into $\baire$.
\end{proof}

\section{Comparing $f_a^{PSP}$ and $f_a^{GC}$ for Projective $g$}
\label{section_comparing}

In \cite{Hathaway} we showed the following,
 by considering $\omega$-models of $\zf$
 and by showing the contrapositive:
\begin{theorem}
\label{borel_thm}
Fix $c \in \baire$.
Let $g : \baire \to \baire$ be $\Delta^1_1(c)$.
Then for any $a \in \baire$,
 $$[f_a^{GC} \cap g = \emptyset] \Rightarrow
 a \in \Delta^1_1(c).$$
\end{theorem}

Moving up the definability hierarchy
 to $\Delta^1_2(c)$ functions $g$, we showed
 the following.
We will sketch the proof for reference.

\begin{theorem}
\label{inacc_needed}
Fix $c \in \baire$.
Assume $\omega_1$ is inaccessible in $L[c]$.
Let $g : \baire \to \baire$ be $\Delta^1_2(c)$.
Then for any $a \in \baire$,
 $$[f_a^{GC} \cap g = \emptyset] \Rightarrow
 a \in L[c].$$
\end{theorem}
\begin{proof}
Assume that $a \not\in L[c]$.
We will show that $g \cap f_a^{GC} \not= \emptyset$.
By the Shoenfield Absoluteness theorem,
 $L[c]$ can understand $g$
 on all of its forcing extensions.
Thus by Theorem~\ref{abstract_thm}
 we have that for any $a \in \baire$,
 $$[f_a^{GC} \cap g = \emptyset] \Rightarrow a \in L[c],$$
 which is what we want.
\end{proof}

We believe that
 the inaccessible cardinal from Theorem~\ref{inacc_needed}
 can be removed
 (and $L[c]$ need not be countable).
The assumption that $\omega_1$ is inaccessible in $L[c]$
 is only needed to get
 $\mc{P}^{L[c]}(\mbb{H}^{L[c]})$ to be countable.
However, we can always force
 it to be countable and then
 we can attempt to use the
 Shoenfield absoluteness theorem
 to get what we want.

Moving up the projective hierarchy,
 in \cite{Hathaway}
 we showed the following:
\begin{theorem}
\label{mn_theorem}
Fix $c \in \baire$.
Assume $\pd$
 (Projective Determinacy).
Let $g : \baire \to \baire$ be $\Delta^1_n(c)$
 for some $n \ge 3$.
Then $$[f_a^{GC} \cap g = \emptyset] \Rightarrow
 a \in \mc{M}_{n-2}(c).$$
\end{theorem}

The proof of Theorem~\ref{mn_theorem}
 uses that $\mc{M}_{n-2}(c)$ exists,
 that $\omega_1$ is inaccessible in this model,
 and that its forcing extensions
 by Tree-Hechler Forcing $\mbb{H}$
 can \textit{compute}
 $\Sigma^1_{n}(c)$ truth.
Here, $\mc{M}_n(c)$ is a canonical inner model
 with $n$ Woodin cardinals and containing $c$.
The requirement that $\omega_1$ be inaccessible
 is only needed to get the collection of
 dense subsets of
 $\mbb{H}$ in the inner
 model to be countable in $V$.

Note that assuming $\pd$,
 we have that
 $a$ is $\Delta^1_2$ in $c$ and a countable ordinal
 iff $a \in L[c]$.
For $n \ge 3$,
 $a$ is $\Delta^1_{n}$ in $c$ and a countable ordinal
 iff $a \in \mc{M}_{n-2}(c)$
 \cite{Steel}.
Thus, we may succinctly write the following:
\begin{fact}
\label{pd_fact}
Assume $\pd$.
Let $1 \le n < \omega$.
Let $g : \baire \to \baire$ be a $\Delta^1_n(c)$ function
 for some fixed $c \in \baire$.
Then $f_a^{GC} \cap g = \emptyset$
 implies $a$ is $\Delta^1_n$ in $c$
 and a countable ordinal.
\end{fact}

Recall the definitions
 of $D^{PSP}_g$ and $D^{GC}_g$
 from Definition~\ref{D_g_def}.
For $g : \baire \to \baire$ in a (lightface) projective pointclass
 $\Gamma$, the situation is recorded by the following table
 (assuming PD).
The takeaway is that depending on the complexity of $g$,
 sometimes we have a better bound on $D_g^{GC}$,
 and other times we have a better bound on $D_g^{PSP}$.

\begin{table}[h!]
\caption{\label{tab:individual_table}}
  \begin{center}
    \label{tab:table1}
    \begin{tabular}{l|c|r}
      $\Gamma$ & $D_g^{GC}$ bound & $D_g^{PSP}$ bound \\
      \hline
      & & \\
      $\Delta^1_1$ & $D_g^{GC} \subseteq Q_1$ & $D_g^{PSP} \subseteq C_1$ \\
      $\Delta^1_2$ & $D_g^{GC} \subseteq C_2$ & $D_g^{PSP} \subseteq C_3$ \\
      $\Delta^1_3$ & $D_g^{GC} \subseteq Q_3$ & $D_g^{PSP} \subseteq C_3$ \\
      $\Delta^1_4$ & $D_g^{GC} \subseteq C_4$ & $D_g^{PSP} \subseteq C_5$ \\
      $\Delta^1_5$ & $D_g^{GC} \subseteq Q_5$ & $D_g^{PSP} \subseteq C_5$ \\
      ... & ... & ... \\
    \end{tabular}
  \end{center}
\end{table}
For $n$ odd, $C_n$ is the largest countable $\Pi^1_n$ set.
For $n$ even, $C_n$ is the largest countable $\Sigma^1_n$ set,
 which is also that set of all reals that are $\Delta^1_n$
 in a countable ordinal.
For $n$ odd, $Q_n \supseteq C_n$ is the set of all
 reals that are $\Delta^1_n$ in a countable ordinal.
The middle column of the table is by Theorems
 ~\ref{borel_thm}, ~\ref{inacc_needed}, ~\ref{mn_theorem}.

The rightmost column comes from the following argument:
Suppose $n$ is odd for simplicity.
Suppose $g$ is in the pointclass $\Delta^1_n$.
So $g$ is $\Sigma^1_n$.
The set $D^{PSP}_g$ is $\forall^\mbb{R} \neg \Sigma^1_n
 = \Pi^1_n$
 by Lemma~\ref{raw_complexity_of_d}.
Thus if the pointclass $\Pi^1_n$ has the PSP
 and $D_g^{PSP}$ is not contained in the largest countable
  $\Pi^1_n$ set, then $D_g^{PSP}$ must
  have a perfect subset.
By PD we have that $\Pi^1_n$ has the PSP
 and since $D_g^{PSP}$ does not contain a perfect
 subset,
 $D_g^{PSP}$ must be contained in the largest
 countable $\Pi^1_n$ set
 (which is $C_n$).

Note that we can combine the $f_a^{GC}$ and $f_a^{PSP}$
 families together into one to get the best
 of both worlds:
\begin{definition}
Let $\{ f_a^{BOTH} : a \in \baire \}$
 be the family of functions from $\baire$ to $\baire$
 such that for any reals $a,x \in \baire$,
 $$f_a^{BOTH}(0 ^\frown x) = f^{GC}_a(x)$$
 $$f_a^{BOTH}(1 ^\frown x) = f^{PSP}_a(x)$$
 and for $i \ge 2$,
 define $f_a^{BOTH}(i ^\frown x)$
 to be the zero sequence.
\end{definition}

Let $D_g^{BOTH} := \{ a \in \baire :
 f^{BOTH}_a \cap g = \emptyset\}$.
See Table~\ref{tab:both_table}
 for the bounds
 for the $D_g^{BOTH}$ sets for projective $g$:
\begin{table}[h!]
\caption{\label{tab:both_table}}
  \begin{center}
    \label{tab:table1}
    \begin{tabular}{l|c|r}
      $\Gamma$ & $D_g^{BOTH}$ bound \\
      \hline
      & & \\
      $\Delta^1_1$ & $D_g^{BOTH} \subseteq C_1$ \\
      $\Delta^1_2$ & $D_g^{BOTH} \subseteq C_2$ \\
      $\Delta^1_3$ & $D_g^{BOTH} \subseteq C_3$ \\
      $\Delta^1_4$ & $D_g^{BOTH} \subseteq C_4$ \\
      $\Delta^1_5$ & $D_g^{BOTH} \subseteq C_5$ \\
      ... & ... & ... \\
    \end{tabular}
  \end{center}
\end{table}


\section{Functions in $L(\mbb{R})[\mc{U}]$}

The point of this section is to show,
 assuming large cardinals,
 that functions $g : \baire \to \baire$ in models
 of the form $L(\mbb{R})[\mc{U}]$,
 where $\mc{U}$ is a selective ultrafilter on $\omega$,
 cannot avoid $\{ f_a^{GC} : a \in \baire \}$.
Additionally,
 we will recall that such models
 satisfy the $\psp$, and so functions
 in these models also cannot avoid
 $\{ f_a^{PSP} : a \in \baire \}$.

The significance of Lemma~\ref{fancy_lemma}
 that is soon to come
 is that if the PSP holds in an
 appropriate forcing extension on $L(\mathbb{R})$,
 then no $g$ in that forcing extension can avoid
 $\{ f_a^{GC} : a \in \baire \}$.
Note however that Lemma~\ref{fancy_lemma}
 applies to the forcing over $L(\mbb{R})$
 to add a Cohen subset of $\omega_1$.
However, in that forcing extension,
 there is a well-ordering of $\mbb{R}$,
 so $\Psi$ fails there.

The hypothesis of the next lemma follows
 from a proper class of Woodin cardinals.
This is because of the following fact:
\begin{fact}
Assume $\ac$.
Assume there is a proper class of Woodin cardinals.
Then the following hold:
\begin{itemize}
\item[1)] Every set of reals in $L(\mathbb{R})$ is
 universally Baire.
Moreover, for every universally Baire
 set $A \subseteq \mathbb{R}$,
 the model $L(A,\mathbb{R})$ satisfies $\ad^+$
 (Theorem 7.5 of \cite{ult_L})
 and every set of reals in $L(A,\mathbb{R})$
 is universally Baire
 (Theorem 7.4 of \cite{ult_L}).
\item[2)] Every universally Baire binary relation
 on $\baire$ can be uniformized by a universally Baire function
 \cite{Steel_dm}.
\end{itemize}
\end{fact}

Let us briefly discuss part of 1)
 for the interested reader.
Assume that we already have the result that
 a proper class of Woodin cardinals implies
 $\mbox{AD}^{L(\mathbb{R})}$.
Now assume that there is a proper class of
 Woodin cardinals.
Then in every forcing extension
 there is a proper class of Woodin cardinals.
Hence
 $\mbox{AD}^{L(\mathbb{R})}$ holds
 in every forcing extension.
This by \cite{Feng} implies that every set of reals
 in $L(\mathbb{R})$ is universally Baire.
\begin{lemma}
\label{fancy_lemma}
Assume that for each binary relation $E$ on $\baire$
 in $L(\mathbb{R})$,
 $E$ has a uniformization $u$ such that
 $L(u,\mathbb{R}) \models \ad^+$
 (this holds if there is a proper class of Woodin cardinals).
Let $\mbb{Q} \in L(\mbb{R})$ be a forcing
 that does not add reals
 (when forcing over $L(\mathbb{R})$)
 and whose underlying
 set is $\baire$.
Let $\dot{g} \in L(\mbb{R})$ be such that
 $(1 \forces_\mbb{Q} \dot{g} : \baire \to \baire)^{L(\mbb{R})}$.
Then there exists a set of ordinals
 $C \subseteq \mbox{Ord}$ in an inner model
 of $\ad^+$ containing all the reals such that
 $(\forall q \in \mbb{Q})
  (\forall a \in \baire)$
 $$(q \forces_\mbb{Q} f_a^{GC} \cap \dot{g} = \emptyset)^
 {L(\mbb{R})} \Rightarrow
 a \in L[C,q].$$
\end{lemma}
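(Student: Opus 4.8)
The plan is to reduce the statement to Theorem~\ref{infborthm} by replacing the intensional data $(\mbb{Q},\dot g)$ with a single honest function of two real variables. First I would record a density fact: since $\mbb{Q}$ adds no reals, for every $q \in \mbb{Q}$ and every $x \in \baire$ there are $q' \le q$ and $y \in \baire$ with $(q' \forces_\mbb{Q} \dot g(\check x) = \check y)^{L(\mbb{R})}$. Working in the Boolean completion of $\mbb{Q}$ inside $L(\mbb{R})$, ``$\mbb{Q}$ adds no reals'' gives $\bigvee_{y \in \baire} \| \dot g(\check x) = \check y \| = 1$, and meeting this join with $q$ produces the desired $q'$. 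Consequently the relation
$$S\big((q,x),\, q'\big) \ :\Leftrightarrow\ q' \le q \ \wedge\ (\exists y)\,(q' \forces_\mbb{Q} \dot g(\check x) = \check y)^{L(\mbb{R})}$$
is a binary relation on $\baire$ (coding $(q,x)$ as a single real), definable in $L(\mbb{R})$, whose domain is all of $\baire \times \baire$.

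Next I would invoke the uniformization hypothesis. Since $S \in L(\mbb{R})$ and $V$ uniformizes every binary relation on $\baire$ lying in $L(\mbb{R})$, fix a total $u \in V$ with $u \subseteq S$. Writing $q' = u(q,x)$, let $\Gamma(q,x) \in \baire$ be the unique real with $(q' \forces_\mbb{Q} \dot g(\check x) = \check{\Gamma(q,x)})^{L(\mbb{R})}$; this is well-defined because a condition forcing $\dot g(\check x)$ to equal some $\check y$ determines $y$ uniquely. Thus $\Gamma : \baire \times \baire \to \baire$ is a genuine total function in $V$. By $\ad^+$ every function from (a finite power of) $\baire$ to $\baire$ is $\infty$-Borel, so $\Gamma$ has an $\infty$-Borel code $(C,\varphi)$, and I may arrange $C$ to also encode $u$ and the forcing relation so that $\Gamma(q,x)(n)=m \Leftrightarrow L[C,q,x]\models \varphi(C,q,x,n,m)$. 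This $C$ is the set of ordinals the lemma demands.

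Finally I would fix $q \in \mbb{Q}$ and $a \in \baire$ and argue the contrapositive. Assume $a \notin L[C,q]$. For this fixed $q$ the single-variable function $g_q := \Gamma(q,\cdot) : \baire \to \baire$ is $\infty$-Borel with code $C \oplus q$ (a set of ordinals, as $q$ is a real), and $L[C \oplus q] = L[C,q]$. Since $\ad^+$ implies there is no injection of $\omega_1$ into $\baire$, Theorem~\ref{infborthm} applies to $g_q$ and yields, from $a \notin L[C,q]$, a real $x$ with $f_a(x) = g_q(x) = \Gamma(q,x)$. Putting $q' := u(q,x) \le q$, we get $(q' \forces_\mbb{Q} \dot g(\check x) = \check{f_a(x)})^{L(\mbb{R})}$, so $q'$ forces that $\dot g$ and $f_a$ agree at $x$, i.e.\ $q'$ forces $\dot g \cap \check f_a \neq \emptyset$. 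As $q' \le q$, this shows $q \not\forces_\mbb{Q} \dot g \cap \check f_a = \emptyset$, which is the contrapositive of the desired implication.

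The main obstacle is the conceptual move in the first two paragraphs: turning $(\mbb{Q},\dot g)$ into one extensional function $\Gamma$ to which Theorem~\ref{infborthm} applies verbatim. Two points need care: (i) the density fact, which is what forces single conditions to decide $\dot g(\check x)$ outright and thereby makes $S$ total (and hence uniformizable) --- this is precisely where ``$\mbb{Q}$ adds no reals'' is used; and (ii) verifying that the uniformizing function $u$ lives in $V$ rather than in $L(\mbb{R})$, which is the sole role of the uniformization hypothesis. Once $\Gamma$ and its code $C$ are in hand, the remainder is a direct citation of Theorem~\ref{infborthm} with the parameter $q$ absorbed into the code.
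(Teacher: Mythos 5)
Your proposal is correct, and it shares the paper's two essential moves: uniformize the relation ``$q' \le q$ and $q'$ decides $\dot g(\check x)$ as a ground-model real'' (total because $\mbb{Q}$ adds no reals), then take an $\infty$-Borel code in the paper's strong sense, which $\ad^+$ supplies since every set of reals is $\infty$-Borel. Where you genuinely diverge is the endgame. The paper uniformizes both coordinates at once, obtaining $u(q,x) = (q',y)$ with code $(C,\varphi)$, and then \emph{re-runs} the generic construction inline: working over $L[C,q]$, it fixes names $\dot q', \dot y$ with $1 \forces_{\mbb{H}} \varphi(\check C, \check q, \dot x, \dot q', \dot y)$ and interleaves meeting the countably many dense sets with $\le^A$-extensions deciding $\dot y(i) = m_i$ and stem extensions steering $f_a(x)(i) = m_i$, exactly as in the proof of Theorem~\ref{infborthm}. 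You instead package the forced value into a genuine total function $\Gamma(q,x)$ in $V$ (well-defined since a single condition cannot force $\dot g(\check x)$ to equal two distinct check names), note that the section $g_q = \Gamma(q,\cdot)$ is $\infty$-Borel with code $C \oplus q$ where $L[C \oplus q] = L[C,q]$, and cite Theorem~\ref{infborthm} verbatim; the paper's approach keeps explicit control of the names $\dot q', \dot y$, while yours buys economy by confining all name-manipulation and the Main Lemma machinery to the already-proven theorem. Two small points of hygiene: your aside that $C$ should ``also encode $u$ and the forcing relation'' is unnecessary and not literally meaningful ($u$ is a function on reals, not a set of ordinals) --- what you need, and what $\ad^+$ gives via the paper's convention for $\infty$-Borel \emph{functions}, is precisely that the digit relation $\Gamma(q,x)(n) = m$ is computed by satisfaction in $L[C,q,x]$; and your density fact is cleaner phrased via the ZF forcing theorem inside $L(\mbb{R})$ (the conditions deciding $\dot g(\check x)$ to be some $\check y$ are dense below $q$), sidestepping any fuss about passing from the Boolean completion back to $\mbb{Q}$ in a choiceless setting. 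With those caveats, your reduction is a legitimate and arguably cleaner route to the same lemma.
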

\begin{proof}
Since we can uniformize every binary relation on $\baire$
 that is in $L(\mbb{R})$,
 let $u : \mbb{Q} \times \baire
 \to \mbb{Q} \times \baire$ be such that
 $L(u,\mathbb{R}) \models \ad^+$ and
 $(\forall q \in \mbb{Q})
  (\forall x \in \baire)$,
 if $u(q,x) = (q',y)$, then
 $q' \le q$ and
 $$(q' \forces_\mbb{Q} \dot{g}(\check{x})
 = \check{y})^{L(\mbb{R})}.$$
Since $L(u,\mathbb{R}) \models \ad^+$,
 let $(C,\varphi)$ be an $\infty$-Borel
 code for $u$ in $L(u,\mathbb{R})$.
That is,
 $(\forall q,q' \in \mbb{Q})
  (\forall x,y \in \baire)$
 $$u(q,x) = (q',y) \Leftrightarrow
 L[C,q,x,q',y] \models
 \varphi(C,q,x,q',y).$$
Note that by our convention for
 $\infty$-Borel codes for functions to $\baire$
 or similar ranges,
 if $u(q,x) = (q',y)$, then
 $q',y \in L[C,q,x]$.

Now fix $q \in \mbb{Q}$.
Assume that $a \not\in L[C,q]$.
We will show that
 $\neg (q \forces_\mbb{Q}
 \dot{g} \cap \check{f}_a = \emptyset)^{L(\mbb{R})}$.
We will do this
 by constructing a $q' \le q$
 and an $x \in \baire$ such that
 $(q' \forces_\mbb{Q} \dot{g}(\check{x}) =
 f_a^{GC}(\check{x}))^{L(\mbb{R})}$.
Consider $L[C,q]$.
The $x$ will be generic over this model
 by the forcing $\mbb{H}^{L[C,q]}$.
Then, setting $(q',y) = u(q,x)$,
 we will have
 $(q' \forces_\mbb{Q} \dot{g}(\check{x}) = \check{y})^
 {L(\mbb{R})}$.
At the same time, we will construct $x$ so that
 $f_a^{GC}(x) = y$.

Let $\dot{x}$ be $\mathbb{H}^{L[C,q]}$-name such that
 $1 \forces \dot{x} = 
 \bigcup \{ t : (\exists h)\, (t,h) \in \dot{G} \}$,
 where $\dot{G}$ is the canonical name for the
 generic filter.
That is, $\dot{x}$ is a name for the
 real $x$ we will construct,
 where $x = \{ t : (\exists h)\, (t,h) \in \dot{G} \}$
 where $G$ is the generic filter we construct.
We will now construct $x$ by building
 a generic filter for $\mbb{H}^{L[C,q]}$
 over $L[C,q]$.
Let $\dot{q}', \dot{y} \in L[C,q]$ be such that
 $(1 \forces_\mbb{H}
 \varphi(\check{C},\check{q},\dot{x},
 \dot{q}',\dot{y}
  ))^{L[C,q]}$.
Then, letting
 $q' = (\dot{q}')_x$ and $y = (\dot{y})_x$
 be the valuations of these names with respect
 to the generic $x$,
 we will have
 $L[C,q,x] \models \varphi(
 C, q, x, q', y)$, so
 $u(q,x) = (q', y)$, which implies
 $q' \le q$ and
 $q' \forces_\mbb{Q} (\dot{g}(\check{x}) = \check{y})
 ^{L(\mbb{R})}$.

Let $\langle D_i : i < \omega \rangle$
 be an enumeration of the dense subsets of
 $\mbb{H}^{L[C,q]}$ in $L[C,q]$.
Let $p_0 \le^A 1$ be in $D_0$.
Let $p_0' \le^A p_0$ and $m_0 \in \omega$
 be such that $p_0'$ decides
 $\dot{y}(0)$ to be $m_0$.
That is,
 $( p_0' \forces_\mbb{H} \check{y}(0) = \check{m}_0 )^
 {L[C,q]}$.
Let $p_0'' \le p_0'$ extend the stem of $p_0'$
 by one to ensure that
 $f_a^{GC}(x)(0) = m_0$.

Now let $p_1 \le^A p_0''$ be in $D_1$.
Let $p_1' \le^A p_1$ and
 $m_1 \in \omega$ be such that
 $( p_1' \forces_\mbb{H} \check{y}(1) = \check{m}_1 )^
 {L[C,q]}$.
Let $p_1'' \le p_1'$ extend the stem of $p_1'$
 by one to ensure that
 $f_a^{GC}(x)(1) = m_1$.

Continue this procedure infinitely.
The descending sequence of conditions constructed
 yields a generic ultrafilter $G$ for
 $\mbb{H}^{L[C,q]}$.
By the way $x = (\dot{x})_G$ was constructed,
 we have $f_a^{GC}(x) = m_i$ for all $i < \omega$.
We also have
 $y(i) = m_i$ for all $i < \omega$.
Finally, we have that
 $(q' \forces_\mbb{Q} \dot{g}(\check{x}) = \check{y} )
 ^{L(\mbb{R})}$.
This completes the proof.
\end{proof}

\begin{observation}
Assume that $\textnormal{PSP}$ holds.
Then a forcing extension that does not add reals
 satisfies $\textnormal{PSP}$ iff every uncountable set
 of reals in the extension has an uncountable
 subset in the ground model.
This is because every perfect set of reals in
 the extension is already in the ground model.
\end{observation}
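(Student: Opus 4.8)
The plan is to reduce both directions to the single fact already flagged in the statement: since the forcing adds no reals, no perfect sets are added either, so every perfect set of reals of the extension lies in the ground model. First I would make this precise. Working in $V[G]$, a perfect $P \subseteq \baire$ is closed, hence $P = [T_P]$ where $T_P = \{ s \in \bairenodes : (\exists x \in P)\, s \subseteq x \}$ is its tree of initial segments. Under a fixed bijection $\bairenodes \to \omega$ lying in $V$, the tree $T_P$ is coded by a subset of $\omega$, i.e.\ by a real; as the extension adds no reals, this code lies in $V$, so $T_P \in V$. Because $\baire^{V[G]} = \baire^V$, computing branches gives $[T_P]^{V[G]} = [T_P]^V$, whence $P \in V$ as a set of reals.

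Next I would record two routine absoluteness facts. (i) For any $P \in V$, the property of being perfect is absolute between $V$ and $V[G]$: the assertion ``$x$ is a limit point of $P$'' says that each basic open neighborhood of $x$ meets $P \setminus \{x\}$, which quantifies only over elements of $P$ and of $\baire$, and these are identical in the two models; hence ``$P$ is non-empty and equals its set of limit points'' holds in $V$ iff it holds in $V[G]$. (ii) A set $T \in V$ that is uncountable in $V[G]$ is uncountable in $V$, since a surjection $\omega \to T$ belonging to $V$ would persist to $V[G]$. I would also use the $\zf$ fact that a perfect set admits a continuous injection of $\cantor$, hence is uncountable in any model containing it.

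With these in hand both directions are short. For the forward direction, assume $V[G] \models \mbox{PSP}$ and let $S \in V[G]$ be uncountable. Then $S$ has a perfect subset $P$ in $V[G]$; by the first paragraph $P \in V$, and $P$ is uncountable in both models, so $P$ is an uncountable subset of $S$ lying in $V$. For the reverse direction, assume every uncountable set of reals of $V[G]$ has an uncountable subset in $V$, and let $S \in V[G]$ be uncountable. Fixing an uncountable $T \in V$ with $T \subseteq S$, fact (ii) makes $T$ uncountable in $V$, so $\mbox{PSP}$ in $V$ yields a perfect $P \in V$ with $P \subseteq T$; by fact (i) $P$ is still perfect in $V[G]$, and $P \subseteq S$, witnessing $\mbox{PSP}$ for $S$.

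The only genuine content, and hence the step to handle carefully, is the first paragraph: that ``adds no reals'' upgrades to ``adds no perfect sets.'' Everything afterward is bookkeeping with absoluteness and the persistence of countability witnesses; in particular, the two possible readings of ``uncountable subset in the ground model'' (uncountable in $V$ versus in $V[G]$) coincide here because the perfect set produced is uncountable in both models.
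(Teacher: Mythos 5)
Your proof is correct and follows exactly the route the paper intends: the paper's entire justification for the observation is the one-line remark that every perfect set in the extension already lies in the ground model, which is precisely the key fact you establish via tree codes in your first paragraph. The remaining absoluteness bookkeeping (perfectness of a coded tree, persistence of countability witnesses, uncountability of perfect sets in $\zf$) is left implicit in the paper, and you have filled it in correctly.
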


We will use the \textit{tower number}
 for the next lemma.
\begin{definition}
Given $A, B \subseteq \omega$,
 we write $A \supseteq^* B$
 (and say $A$ is a
 \textit{superset mod} finite of $B$)
 iff $B - A$ is finite.
A \textit{tower}
 is a sequence $\langle A_\alpha : \alpha < \lambda \rangle$
 of infinite subsets of $\omega$
 (where $\lambda$ is an ordinal) such that
 $(\forall \alpha < \beta < \lambda)\,
 A_\alpha \supseteq^* A_\beta$.
The tower number $\mathfrak{t}$
 is the length $\lambda$ of the shortest
 tower that cannot be end extended to
 a strictly longer tower.
\end{definition}

In other words,
 the tower number $\mathfrak{t}$
 is the smallest length $\lambda$
 of a tower
 $\langle A_\alpha : \alpha < \lambda \rangle$
 such that there is no infinite
 $B \subseteq \omega$ such that
 $(\forall \alpha < \lambda)\,
 A_\alpha \supseteq^* B$.
See \cite{Blass} for a discussion
 of the tower number.

Paul Larson pointed out this next argument,
 along with using the generic absoluteness
 of the theory of $L(\mbb{R})$.

\begin{lemma}
\label{t_lemma}
Assume $\ac$.
Assume $\omega_1 < \mf{t}$.
Let $\mbb{Q}$ be the $P(\omega)/\mbox{Fin}$ forcing.
Then $(1 \forces_\mbb{Q} \textnormal{PSP})^{L(\mbb{R})}$.
\end{lemma}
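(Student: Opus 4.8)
The plan is to reduce $\mbox{PSP}$ in the extension to a reflection property for uncountable sets of reals, and then to establish that property by a tower/fusion construction whose only essential use of the hypothesis $\omega_1 < \mf{t}$ is to bound a descending $\omega_1$-sequence of conditions.

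First I would record three facts. Since $\ad^+$ implies $\ad$, the ground model $L(\mbb{R})$ satisfies $\mbox{PSP}$. The poset $\mbb{Q} = P(\omega)/\mbox{Fin}$ is the same in $V$ and in $L(\mbb{R})$, as they have the same reals, and it is $\sigma$-closed (countable towers have pseudo-intersections, since $\omega_1 \le \mf{t}$); hence it adds no reals over $L(\mbb{R})$, so $\baire^{L(\mbb{R})[\mc{U}]} = \baire^{L(\mbb{R})}$. By the preceding Observation, because $L(\mbb{R}) \models \mbox{PSP}$ and $\mbb{Q}$ adds no reals, we have $(1 \forces_\mbb{Q} \mbox{PSP})^{L(\mbb{R})}$ if and only if $\mbb{Q}$ forces over $L(\mbb{R})$ that every uncountable set of reals has an uncountable subset lying in $L(\mbb{R})$. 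So it suffices to force this reflection statement.

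To force it, fix a name $\dot X \in L(\mbb{R})$ and a condition $q_0$ with $(q_0 \forces_\mbb{Q} \dot X \mbox{ is an uncountable set of reals})^{L(\mbb{R})}$, and work in $V$ (where choice is available) while reading off forcing facts computed in $L(\mbb{R})$. I would build, by recursion on $\alpha < \omega_1$, a $\le$-descending sequence $\langle q_\alpha \rangle$ of conditions together with distinct reals $\langle r_\alpha \rangle$ so that $(q_{\alpha+1} \forces \check r_\alpha \in \dot X)^{L(\mbb{R})}$. At a successor step, since $q_\alpha$ forces $\dot X$ uncountable and the previously chosen reals form a countable set $s_\alpha$ coded by a single real (hence an element of $L(\mbb{R})$), $q_\alpha$ forces $\exists r\,(r \in \dot X \wedge r \notin \check s_\alpha)$; because $\mbb{Q}$ adds no reals over $L(\mbb{R})$, I can refine to some $q_{\alpha+1} \le q_\alpha$ deciding such an $r$ to equal a specific ground-model real $r_\alpha \notin s_\alpha$. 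Countable limit stages are handled by $\sigma$-closure. This is where the hypothesis enters decisively: the full sequence $\langle q_\alpha : \alpha < \omega_1 \rangle$ is a descending $\omega_1$-chain in $P(\omega)/\mbox{Fin}$, i.e.\ a tower of length $\omega_1 < \mf{t}$, so it has a pseudo-intersection $q^* \in \mbb{Q}$ that is a lower bound for every $q_\alpha$.

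Finally, let $X_{q^*} = \{ r \in \baire : (q^* \forces \check r \in \dot X)^{L(\mbb{R})} \}$. Since the forcing relation is definable in $L(\mbb{R})$, this set is definable there from $q^*$ and $\dot X$, so $X_{q^*} \in L(\mbb{R})$; it contains $r_\alpha$ for every $\alpha < \omega_1$, hence is uncountable in $L(\mbb{R})$ (an $L(\mbb{R})$-injection of it into $\omega$ would be one in $V$, contradicting that the $r_\alpha$ are uncountably many distinct reals in $V$); and since $q^*$ forces each $\check r \in \dot X$ for $r \in X_{q^*}$, we get $(q^* \forces \check X_{q^*} \subseteq \dot X)^{L(\mbb{R})}$. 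Thus below every condition forcing $\dot X$ uncountable there is a condition forcing that $\dot X$ has an uncountable subset lying in $L(\mbb{R})$, so by density $1 \forces_\mbb{Q}$ the reflection statement, completing the reduction. I expect the main obstacle to be bookkeeping rather than depth: keeping every step phrased as a fact about the forcing relation as computed in $L(\mbb{R})$ while the recursion itself runs in $V$, checking that the countable initial segments and the final witness set genuinely lie in $L(\mbb{R})$, and isolating the single point—the lower bound of the $\omega_1$-tower—at which $\omega_1 < \mf{t}$ is actually used.
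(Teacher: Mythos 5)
Your proof is correct and follows essentially the same route as the paper's: build a $\le$-descending $\omega_1$-sequence of conditions deciding distinct ground-model reals into the name, pass limit stages by countable closure, and use $\omega_1 < \mathfrak{t}$ exactly once to obtain a lower bound for the resulting tower. Your one genuine refinement is the final step: replacing the raw set $\{r_\alpha : \alpha < \omega_1\}$ by the definable set $X_{q^*} = \{ r : (q^* \Vdash \check{r} \in \dot{X})^{L(\mathbb{R})} \}$ is actually needed, since an $\omega_1$-sized set of reals constructed in $V$ need not (and under $\mathrm{AD}$ in $L(\mathbb{R})$ cannot) itself belong to $L(\mathbb{R})$ --- a point the paper's write-up glosses over when it asserts $q' \Vdash \{\check{b}_\alpha : \alpha < \omega_1\} \subseteq \dot{S}$ --- and your explicit reduction via the Observation, with the remark that $\mathrm{PSP}$ in $L(\mathbb{R})$ comes from the ambient determinacy hypothesis rather than from $\omega_1 < \mathfrak{t}$, correctly identifies an assumption the lemma's bare statement leaves implicit.
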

\begin{proof}
Fix $\dot{S} \in L(\mbb{R})$ and $q$ such that
 $(q \forces \dot{S} \subseteq \baire$ is uncountable$)^{L(\mbb{R})}$.
We will construct a $q' \le q$ that forces (over $L(\mathbb{R})$)
 that $\dot{S}$ has an uncountable subset in $L(\mathbb{R})$.
By induction, construct (in $V$) a sequence
 $\langle (q_\alpha, b_\alpha) : \alpha < \omega_1 \rangle$
 such that
 1) the $b_\alpha$'s are distinct reals,
 2) the $q_\alpha$'s are decreasing with $q \ge q_0$,
 and 3)
 $(q_\alpha \forces \check{b}_\alpha \in \dot{S})^{L(\mbb{R})}$
 for each $\alpha < \omega_1$.
Every countable initial segment of the sequence
 we are constructing will be in $L(\mathbb{R})$
 (because $L(\mathbb{R})$ contains every
 countable sequence of reals).
Note that we do not get stuck
 at any stage, and so can construct the entire sequence.
However note that the entire (length $\omega_1$)
 sequence may not be in $L(\mathbb{R})$
 because $L(\mathbb{R})$ may satisfy $\ad$
 and hence have no injection of $\omega_1$ into $\mathbb{R}$.

Let $q'$ be a lower bound of the $q_\alpha$'s,
 which exists because they form a
 decreasing, with respect to almost inclusion,
 sequence of infinite subsets of $\omega$,
 and this sequence cannot be maximal
 because $\omega_1 < \mf{t}$.
That is, we construct $q'$ in $V$,
 however it must be in $L(\mathbb{R})$
 because $L(\mathbb{R})$ contains all the reals.
Now let $K = \{ b \in \baire :
 (q' \forces \check{b} \in \dot{S})^{L(\mathbb{R})} \}$.
Note that $K \in L(\mathbb{R})$.
In $V$ we can see that
 $\{ b_\alpha : \alpha < \omega_1 \} \subseteq K$,
 so $K$ is uncountable (in $V$).
But then also $K$ is uncountable in $L(\mathbb{R})$.
Now note that
 $(q' \forces \check{K} \subseteq \dot{S})^{L(\mathbb{R})}$.
Also note that
 $(q' \forces \check{K}$ is uncountable$)^{L(\mathbb{R})}$
 because the forcing does not add any new
 countable sequences of reals.
Hence $q'$ forces that
 $\dot{S}$ has an uncountable subset
 that is in the ground model $L(\mathbb{R})$,
 which by the observation above finishes the proof.
\end{proof}

\begin{theorem}
\label{barren_thm}
Assume $\ac$.
Assume there is a proper class of Woodin
 cardinals.
Let $\mc{U}$ be a selective ultrafilter
 on $\omega$.
Let $g : \baire \to \baire$ be in
 $L(\mbb{R})[\mc{U}]$.
Then $g$ cannot avoid
 $\{ f_a^{GC} : a \in \baire \}$.
\end{theorem}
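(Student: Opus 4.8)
The plan is to realize $L(\mbb{R})[\mc{U}]$ as a forcing extension of $L(\mbb{R})$ and then combine the analysis of Lemma~\ref{fancy_lemma} with the perfect set property. First I would invoke Todorcevic's theorem: since a proper class of Woodin cardinals yields $L(\mbb{R}) \models \ad^+$, every selective ultrafilter $\mc{U}$ on $\omega$ is $L(\mbb{R})$-generic for $\mbb{Q} = (\mc{P}(\omega)/\mbox{Fin})^{L(\mbb{R})}$, so $L(\mbb{R})[\mc{U}]$ is a genuine generic extension. Because $\mbb{Q}$ is countably closed (provable from $\dc$, which holds in $L(\mbb{R})$), it adds no reals; hence $\baire$, each $f_a$, and the set $T := \{ a : g \cap f_a = \emptyset \}$ are the same computed in $L(\mbb{R})$, in $L(\mbb{R})[\mc{U}]$, or in $V$. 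I would fix a name $\dot{g} \in L(\mbb{R})$ and a condition $q_0 \in \mc{U}$ with $q_0 \forces_\mbb{Q} \dot{g} : \baire \to \baire$ and $g = \dot{g}[\mc{U}]$; working below $q_0$ we may assume $1 \forces_\mbb{Q} \dot{g} : \baire \to \baire$. The goal is to show $T$ is countable.

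Next I would apply the analysis of Lemma~\ref{fancy_lemma} to $\dot{g}$. Since only $\ad^+$ in $L(\mbb{R})$ is available (rather than a uniformization of all binary relations of $L(\mbb{R})$), the cleanest route is the $\Sigma^2_1$-reflection alternative described before that lemma: for each $q \in \mbb{Q}$ one obtains a countable set $C_{\dot{g},q} \subseteq \baire$ such that for all $a \in \baire$,
$$(q \forces_\mbb{Q} \dot{g} \cap \check{f}_a = \emptyset)^{L(\mbb{R})} \Rightarrow a \in C_{\dot{g},q}.$$
This is an internal statement of $L(\mbb{R})$ established there under $\ad^+$. (Alternatively, working in a model of $\ad^+$ extending $L(\mbb{R})$ with the same reals that uniformizes the binary relations of $L(\mbb{R})$, Lemma~\ref{fancy_lemma} itself yields a set of ordinals $C$ with the same implication but conclusion $a \in L[C,q]$; since $L[C,q] \cap \baire$ is well-orderable it is countable by $\ad$, and one uses it in place of $C_{\dot{g},q}$.)

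I would then secure the perfect set property in the extension. Lemma~\ref{t_lemma} shows that $\omega_1 < \mf{t}$ implies $(1 \forces_\mbb{Q} \mbox{PSP})^{L(\mbb{R})}$; since ``$1 \forces_\mbb{Q} \mbox{PSP}$'' is a sentence of $\mbox{Th}(L(\mbb{R}))$, I would force over $V$ to a model where $\omega_1 < \mf{t}$, apply Lemma~\ref{t_lemma} there, and transfer the conclusion back by the generic absoluteness of $\mbox{Th}(L(\mbb{R}))$ (available from the proper class of Woodin cardinals). Thus $(1 \forces_\mbb{Q} \mbox{PSP})^{L(\mbb{R})}$ holds outright, and since $\mc{U}$ is $L(\mbb{R})$-generic, $L(\mbb{R})[\mc{U}] \models \mbox{PSP}$.

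Finally I would derive a contradiction from assuming $T$ uncountable. By PSP in $L(\mbb{R})[\mc{U}]$, $T$ has a perfect subset $P$; since $L(\mbb{R}) \models \mbox{PSP}$ and $\mbb{Q}$ adds no reals, the Observation preceding Lemma~\ref{t_lemma} gives $P \in L(\mbb{R})$. As $P \subseteq T$ holds in $L(\mbb{R})[\mc{U}]$ and $P \in L(\mbb{R})$, the forcing theorem provides a single $q^* \in \mc{U}$ with $q^* \forces_\mbb{Q} (\forall a \in \check{P})\, \dot{g} \cap \check{f}_a = \emptyset$; consequently $(q^* \forces_\mbb{Q} \dot{g} \cap \check{f}_a = \emptyset)^{L(\mbb{R})}$ for every $a \in P$, so $P \subseteq C_{\dot{g},q^*}$. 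But $C_{\dot{g},q^*}$ is countable while $P$ is perfect, a contradiction; hence $T$ is countable and $\Psi(g)$ holds. The main obstacle is the second step: reconciling the uniformization hypothesis of Lemma~\ref{fancy_lemma} with the mere assumption of $\ad^+$ in $L(\mbb{R})$, i.e.\ carrying out the $\Sigma^2_1$-reflection argument (or locating a suitable $\ad^+$ model over $L(\mbb{R})$) so that the per-condition sets $C_{\dot{g},q}$ are simultaneously definable in $L(\mbb{R})$ and countable. The remaining steps are then routine applications of the forcing theorem together with the cited lemmas.
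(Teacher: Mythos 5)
Your proposal is correct and takes essentially the same route as the paper: the same characterization of selective ultrafilters as $\mbb{Q}$-generics over $L(\mbb{R})$, the same transfer of $(1 \forces_\mbb{Q} \textrm{PSP})^{L(\mbb{R})}$ via forcing $\omega_1 < \mf{t}$ and generic absoluteness of $\textrm{Th}(L(\mbb{R}))$, and the same appeal to Lemma~\ref{fancy_lemma} --- indeed your parenthetical fallback (an inner model $\mc{M} \supseteq L(\mbb{R})$ of $\ad^+$ uniformizing all binary relations of $L(\mbb{R})$, with $\baire \cap L[C,q]$ countable by $\ad$) is exactly the paper's argument, whereas your main line leans on the ${\bf{\Sigma}}^2_1$-reflection variant that the paper only sketches before the lemma. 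The endgame differs only cosmetically: you extract a perfect set $P \subseteq T$ in $L(\mbb{R})[\mc{U}]$, pull it into $L(\mbb{R})$, and get a single $q^* \in \mc{U}$ by the truth lemma, where the paper instead works entirely inside $L(\mbb{R})$'s forcing relation, using the Observation to find $q' \le q$ and an uncountable ground-model set $S$ with $q' \forces_\mbb{Q} \dot{g} \cap \check{f}_a = \emptyset$ for all $a \in S$.
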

\begin{proof}
Let $\mbb{Q}$ be the $P(\omega)/\mbox{Fin}$ forcing.
Since there is a proper class of Woodin cardinals,
 the first order theory of $L(\mbb{R})$ cannot be
 changed by any set sized forcing
 (see Theorem 7.22 of \cite{Steel_handbook},
 also Theorem 3.1.12 in \cite{Larson_stat_tower}).
There is a forcing extension of $V$ in which
 $\omega_1 < \mf{t}$.
By Lemma~\ref{t_lemma},
 in that forcing extension we have
 $(1 \forces_\mbb{Q} \textrm{PSP})^{L(\mbb{R})}$.
Thus, in $V$ we have
 $(1 \forces_\mbb{Q} \textrm{PSP})^{L(\mbb{R})}$.

Another consequence of a proper class of Woodin
 cardinals is that an ultrafilter on $\omega$
 is selective iff it is $\mbb{Q}$-generic over $L(\mbb{R})$
 (see \cite{Farah} and \cite{Ket}).
Thus, we will show that every name $\dot{g} \in L(\mbb{R})$
 for a function from $\baire$ to $\baire$ satisfies
 $$L(\mbb{R}) \models
 1 \forces_\mbb{Q} \dot{g} \mbox{ cannot avoid } \{ f_a^{GC} : a \in \baire \}.$$

Towards a contradiction,
 fix $\dot{g} \in L(\mbb{R})$ and $q \in \mbb{Q}$ such that
 $$L(\mbb{R}) \models
 q \forces_\mbb{Q} \{ a : \dot{g} \cap \check{f}_a = \emptyset \}
 \mbox{ is uncountable}.$$
Since $L(\mbb{R}) \models q \forces_\mbb{Q} \textrm{PSP}$,
 by the observation above fix
 a condition $q' \le q$ and
 an uncountable set
 $S \subseteq \baire$ in $L(\mbb{R})$ such that
 for all $a \in S$,
 $$L(\mbb{R}) \models
 q' \forces_{\mbb{Q}}
 [\dot{g} \cap \check{f}_a = \emptyset].$$
Since there is a proper class of Woodin cardinals,
 Apply Lemma~\ref{fancy_lemma} to get
 the $C \subseteq \mbox{Ord}$ described there.
We have
 $$(\forall a \in S)\, a \in L[C,q'],$$
 which is a contradiction because
 since $L[C,q']$ is an inner model of $\zfc$
 inside a model of $\ad$,
 $\baire \cap L[C,q']$ is countable.
\end{proof}

\section{Final Questions}

We close with a few questions.

\begin{question}
Does $\ad$ imply that no $g : \baire \to \baire$
 can avoid $\{ f_a^{GC} : a \in \baire \}$?
\end{question}

More generally, we can ask the following:
\begin{question}
Does $\psp$ imply that no $g : \baire \to \baire$
 can avoid $\{ f_a^{GC} : a \in \baire \}$?
\end{question}

\section{Acknowledgements}

I would like the thank Andreas Blass, Paul Larson,
 Grigor Sargsyan, and Trevor Wilson
 for discussions on this project.
Larson pointed out the arguments for
 Proposition~\ref{diag_argument} and
 Lemma~\ref{t_lemma}.
He also verified that a proper class of Woodin cardinals
 implies every relation on $\baire$ in $L(\mbb{R})$
 can be uniformized in some model of $\ad^+$.
Wilson explained how much truth small forcing
 extensions
 of $\mc{M}_n(c)$ can compute.
He also explained how $\pd$ suffices for
 Fact~\ref{pd_fact},
 instead of $\omega$ Woodin cardinals.
Sargsyan explained how to show that
 countable sets or reals are well-behaved using
 sufficient determinacy axioms.
We also thank the referee for many useful
 suggestions.

\end{document}